\def\nrs{\cellcolor{black!25}} 
\numberwithin{equation}{section}
\theoremstyle{plain}
\newtheorem{theorem}{Theorem}[section]
\newtheorem{conjecture}[theorem]{Conjecture}
\newtheorem{corollary}[theorem]{Corollary}
\newtheorem{lemma}[theorem]{Lemma}
\newtheorem{proposition}[theorem]{Proposition}
\newtheorem{problem}[theorem]{Problem}
\theoremstyle{definition}
\newtheorem{example}[theorem]{Example}
\newtheorem{definition}[theorem]{Definition}
\def\lmlt#1{\mathrm{LMlt}(#1)}
\def\aut#1{\mathrm{Aut}(#1)}
\def\dis#1{\mathrm{Dis}(#1)}
\def\lmlt#1{\mathrm{Mlt}_{\ell}(#1)}
\def\inn#1{\mathrm{Inn}(#1)}
\def\tab{\phantom{xxxx}}
\begin{document}

\title{Enumeration of racks and quandles up to isomorphism}

\author{Petr Vojt\v{e}chovsk\'y}

\author{Seung Yeop Yang}

\email[Vojt\v{e}chovsk\'y]{petr@math.du.edu}

\email[Yang]{seungyeop.yang@knu.ac.kr}

\address{Department of Mathematics, University of Denver, 2390 S York St, Denver, Colorado, 80208, USA}

\address{Department of Mathematics, Kyungpook National University, Daegu, 41566, Republic of Korea}

\begin{abstract}
Racks and quandles are prominent set-theoretical solutions of the Yang-Baxter equation. We enumerate racks and quandles of orders $n\le 13$ up to isomorphism, improving upon the previously known results for $n\le 8$ and $n\le 9$, respectively. The enumeration is based on the classification of subgroups of small symmetric groups up to conjugation, on a representation of racks and quandles in symmetric groups due to Joyce and Blackburn, and on a number of theoretical and computational observations concerning the representation. We explicitly find representatives of isomorphism types of racks of order $\le 11$ and quandles of order $\le 12$. For the remaining orders we merely count the isomorphism types, relying in part on the enumeration of $2$-reductive racks and $2$-reductive quandles due to Jedli\v{c}ka, Pilitowska, Stanovsk\'y and Zamojska-Dzienio.
\end{abstract}

\keywords{Rack, quandle, $2$-reductive rack, medial rack, Yang-Baxter equation, enumeration, isomorphism search, oriented knot, subgroups of symmetric group.}

\subjclass{16T25, 20N05, 57M27}

\thanks{Petr Vojt\v{e}chovsk\'y supported by a 2015 PROF grant of the University of Denver.}

\maketitle

\section{Introduction}

A groupoid $(X,*)$ is a \emph{left quasigroup} if every left translation
\begin{displaymath}
    L_x:X\to X, \quad y\mapsto yL_x = x*y
\end{displaymath}
is a bijection of $X$. A left quasigroup is a \emph{rack} if the left self-distributive law
\begin{displaymath}
    x*(y*z) = (x*y)*(x*z)
\end{displaymath}
holds. A rack is a \emph{quandle} if it is idempotent, i.e., if
\begin{displaymath}
    x*x=x
\end{displaymath}
holds.

Racks and quandles form well-studied classes of set-theoretical solutions of the Yang-Baxter equation \cite{Drinfeld}. Moreover, racks and quandles appear in low-dimensional topology as invariants of oriented knots and links \cite{Joyce, Matveev}. The textbook \cite{EN} offers a friendly introduction to the theory of quandles.

In this paper we enumerate racks and quandles of order $n\le 13$ up to isomorphism, improving upon previously known enumerations for $n\le 8$ for racks and $n\le 9$ for quandles. We also make all isomorphism types of racks of order $n\le 11$ and quandles of order $n\le 12$ available online.

\subsection{Notation}

Let $X$ be a nonempty set and let $S_X$ be the symmetric group on $X$. If a group $G$ acts on $X$ and $x\in X$, we denote by $xG$ the orbit of $x$, by $G_x$ the stabilizer of $x$, and by $X/G$ a complete set of orbit representatives. The set $X/G$ is not uniquely determined but we will assume that one such set has been fixed whenever $X$ and $G$ are given.

For $f$, $g\in S_X$ and $G\le S_X$, we let $g^f = f^{-1}gf$, $G^f = f^{-1}Gf$ and $f^G = \{f^g:g\in G\}$. As usual, let $C_G(H)$ and $N_G(H)$ be the centralizer and the normalizer of $H\subseteq S_X$ in $G$.

For a groupoid $(X,*)$ and $x\in X$, let $L_x$ or $L_x^*$ be the left translation by $x$, the latter notation being used when we need to keep track of the operation. Similarly, $R_x$ or $R_x^*$ will denote the right translation by $x$ in $(X,*)$. The automorphism group of $(X,*)$ will be denoted by $\mathrm{Aut}(X,*)$.

For a left quasigroup $(X,*)$, let
\begin{align*}
    \lmlt{X,*} &= \langle L_x:x\in X\rangle \le S_X,\\
    \dis{X,*} &= \langle L_x^{-1}L_y:x,\,y\in X\rangle\le\lmlt{X,*}
\end{align*}
be the \emph{left multiplication group} and the \emph{displacement group} of $(X,*)$, respectively.\footnote{In rack and quandle literature, the left multiplication group $\lmlt{X,*}$ of a rack $(X,*)$ is often denoted by $\inn{X,*}$ and is called the \emph{inner automorphism group} of $(X,*)$, a terminology that is in conflict with older conventions for quasigroups and loops \cite{Bruck}.} Note that racks can be equivalently defined as groupoids $(X,*)$ satisfying $\lmlt{X,*}\le\aut{X,*}$.

A rack $(X,*)$ is said to be \emph{$2$-reductive} if
\begin{equation}\label{Eq:2red}
    (x*u)*v = (y*u)*v
\end{equation}
holds for every $x$, $y$, $u$, $v\in X$. A rack that is not $2$-reductive will be called \emph{non-$2$-reductive}.

It is not difficult to see that the following conditions are equivalent for a rack $(X,*)$:
\begin{itemize}
\item $(X,*)$ is $2$-reductive, that is, the identity \eqref{Eq:2red} holds,
\item $(X,*)$ satisfies the identity $(x*u)*v = u*v$,
\item $\lmlt{X,*}$ is commutative.
\end{itemize}

A rack $(X,*)$ is \emph{medial} if it satisfies the medial law
\begin{displaymath}
    (x*u)*(v*y) = (x*v)*(u*y).
\end{displaymath}
One can show (see for instance \cite[Proposition 2.4]{HSV}) that a rack $(X,*)$ is medial if and only if $\dis{X,*}$ is commutative. In particular, every $2$-reductive rack is medial.

A medial rack $(X,*)$ is $2$-reductive if and only if it satisfies the identity $(x*y)*y=y*y$. Therefore, a medial quandle $(X,*)$ is $2$-reductive if and only if it satisfies the identity $(x*y)*y=y$. This last identity is used in \cite{JPSZ} as a definition of $2$-reductivity in the context of medial quandles.

\subsection{Asymptotic growth}

The asymptotic growth of racks and quandles is known. Due to the nature of the estimate, it does not matter whether the algebras in question are counted up to isomorphism or absolutely on a fixed set.

Denote by $r(n)$ (resp. $q(n)$) the number of racks (resp. quandles) of order $n$ up to isomorphism. Blackburn \cite{Blackburn} proved that there are constants $c_1=1/4$ and $c_2=(1/6)\log_2(24)+(1/2)\log_2(3)$ such that for every $\varepsilon>0$ and for all sufficiently large orders $n$ we have
\begin{displaymath}
    2^{c_1 n^2- \varepsilon}\le q(n)\le r(n)\le 2^{c_2 n^2+\varepsilon}.
\end{displaymath}
The lower bound is obtained by exhibiting a large class of racks with $\lmlt{X,*}$ isomorphic to an elementary abelian $2$-group, while the upper bound is based on an estimate for the number of subgroups of symmetric groups and on a relatively straightforward analysis of partitions of $n$. In the same paper, Blackburn also developed a representation of racks and quandles in symmetric groups that deserves to be better known and that we recall in Section \ref{Sc:Representation}.

Ashford and Riordan \cite{AshfordRiordan} improved Blackburn's upper bound and showed that for every $\varepsilon>0$ and for all sufficiently large orders $n$ we have
\begin{displaymath}
    2^{n^2/4-\varepsilon} \le q(n)\le r(n)\le 2^{n^2/4+\varepsilon}.
\end{displaymath}
The main idea for their upper bound is harder to convey. Roughly speaking, a small amount of global information partitions the class of all racks defined on $\{1,\dots,n\}$ into relatively small subsets, and the racks in each of those subsets can then be fully determined by an additional small set of parameters.

\subsection{Exact enumeration}

The exact values of $r(n)$ and $q(n)$ are known only for very small values of $n$.

A brute-force approach (constructing one row of the multiplication table at a time and checking whether the resulting partial groupoid is a partial rack) is feasible for $n\le 7$ or so. Henderson, Macedo and Nelson determined $q(n)$ for $n\le 8$ \cite{HMS}. McCarron reported the values $r(n)$ for $n\le 8$ and $q(n)$ for $n\le 9$ in the Online Encyclopedia of Integer Sequences \cite{OEIS}. Elhamdadi, Macquarrie and Restrepo \cite{EMR} also calculated the values $q(n)$ for $n\le 9$ while investigating automorphism groups of quandles.

Jedli\v{c}ka, Pilitowska, Stanovsk\'y and Zamojska-Dzienio \cite{JPSZ} developed a theory of so-called affine meshes in order to construct and enumerate medial and $2$-reductive quandles of small orders. This allowed them to count medial quandles of order $n\le 13$ and $2$-reductive quandles of order $n\le 16$ up to isomorphism.

A quandle is said to be \emph{connected} if its left multiplication group acts transitively on the underlying set. All connected quandles of order less than $36$ (resp. $48$) were obtained by Vendramin \cite{Ven} (resp. in \cite{HSV}). A library of connected quandles of order less than $48$ is available in \texttt{Rig}, a \texttt{GAP} \cite{GAP} package developed by Vendramin.

\subsection{Summary of results}

\begin{small}

\begin{table}[ht]
\begin{displaymath}
\begin{array}{|r|rrrrrrr|}
    \hline
    n                           &1&2&3&4    &5  &6      &7\\
    r(n)                        &1&2&6&19   &74 &353    &2080\\
    r_{\textrm{med}}(n)         &1&2&6&18   &68 &329    &1965\\
    r_{\textrm{$2$-red}}(n)     &1&2&5&17   &65 &323    &1960\\
    r_{\textrm{non-$2$-red}}(n) &0&0&1&2    &9  &30     &120\\
    \hline
    n                           &8      &9          &10             &11             &12                     &13                         &14\\
    r(n)                        &16023  &\nrs159526 &\nrs2093244    &\nrs36265070   &\nrs\mathit{836395102} &\nrs\mathit{25794670618}   &?\\
    r_{\textrm{med}}(n)         &15455  &\nrs155902 &\nrs2064870    &\nrs35982366   &\nrs\mathit{832699635} &\nrs\mathit{25731050872}   &?\\
    r_{\textrm{$2$-red}}(n)     &15421  &155889     &2064688        &35982357       &\mathit{832698007}     &\mathit{25731050861}       &\mathit{1067863092309}\\
    r_{\textrm{non-$2$-red}}(n) &602    &\nrs3637   &\nrs28556      &\nrs282713     &\nrs3697095            &\nrs 63619757              &?\\
    \hline
\end{array}
\end{displaymath}
\caption{The number of racks $r(n)$, medial racks $r_{\textrm{med}}(n)$, $2$-reductive racks $r_{\textrm{$2$-red}}(n)$ and non-$2$-reductive racks $r_{\textrm{non-$2$-red}}(n)$ of order $n$ up to isomorphism.}\label{Tb:Racks}
\end{table}

\end{small}

\begin{small}

\begin{table}[ht]
\begin{displaymath}
\begin{array}{|r|rrrrrrr|}
    \hline
    n                           &1          &2          &3          &4          &5          &6          &7\\
    q(n)                        &1          &1          &3          &7          &22         &73         &298\\
    q_{\textrm{med}}(n)         &1          &1          &3          &6          &18         &58         &251\\
    q_{\textrm{$2$-red}}(n)     &1          &1          &2          &5          &15         &55         &246\\
    q_{\textrm{non-$2$-red}}(n) &0&0&1&2    &7  &18     &52\\
    \hline
    n                           &8          &9          &10             &11             &12             &13                     &14\\
    q(n)                        &1581       &11079      &\nrs102771     &\nrs1275419    &\nrs21101335   &\nrs\mathit{469250886} &?\\
    q_{\textrm{med}}(n)         &1410       &10311      &98577          &1246488        &20837439       &\mathit{466087635}     &?\\
    q_{\textrm{$2$-red}}(n)     &1398       &10301      &98532          &1246479        &20837171       &\mathit{466087624}     &\mathit{13943041873}\\
    q_{\textrm{non-$2$-red}}(n) &183        &778        &\nrs4239       &\nrs28940      &\nrs264164     &\nrs3163262            &?\\
    \hline
\end{array}
\end{displaymath}
\caption{The number of quandles $q(n)$, medial quandles $q_{\textrm{med}}(n)$, $2$-reductive quandles $q_{\textrm{$2$-red}}(n)$ and non-$2$-reductive quandles $q_{\textrm{non-$2$-red}}(n)$ of order $n$ up to isomorphism.}\label{Tb:Quandles}
\end{table}

\end{small}

Our enumerative results are summarized in Tables \ref{Tb:Racks} and \ref{Tb:Quandles}. In Table \ref{Tb:Racks}, $r(n)$ (resp. $r_{\textrm{med}}(n)$, $r_{\textrm{$2$-red}}(n)$ and $r_{\textrm{non-$2$-red}}(n)$) is the number of racks (resp. medial racks, $2$-reductive racks and non-$2$-reductive racks) of order $n$ up to isomorphism. Obviously, $r(n) = r_{\textrm{$2$-red}}(n) + r_{\textrm{non-$2$-red}}(n)$, but we report all three numbers for the convenience of the reader, to better indicate which results are new, and for future reference. The notation in Table \ref{Tb:Quandles} is analogous but for quandles instead of racks.

New results are reported in shaded cells. If a number in the tables is in roman font, representatives of isomorphism types can be downloaded from the website of the first author. If a number in the tables is in italics, representatives of isomorphism types are not available. The numbers that are both in unshaded cells and it italics are either taken from \cite{JPSZ} or they were provided to us by Jedli\v{c}ka in personal communication. For instance, we constructed $3163262$ representatives of isomorphism types of non-$2$-reductive quandles of order $13$, the number $466087624$ of $2$-reductive quandles of order $13$ is taken from \cite{JPSZ}, resulting in the $469250886$ quandles of order $13$ up to isomorphism.

\subsection{Commented outline of the paper}

In Section \ref{Sc:Conjugacy} we show that a classification of left quasigroups up to isomorphism defined on $X$ can be accomplished by independent classifications of left quasigroups with a given left multiplication group $G\le S_X$. Crucially, it suffices to consider subgroups $G$ of $S_X$ up to conjugation in $S_X$, rather than all subgroups of $S_X$.

\begin{table}[ht]
\begin{displaymath}
    \begin{array}{|c|rrrrrrrrrrrrr|}
        \hline
        n&1&2&3&4&5&6&7&8&9&10&11&12&13\\
        a(n)&1&2&4&11&19&56&96&296&554&1593&3094&10723&20832\\
        b(n)&0&0&1&4&10&36&70&235&472&1413&2858&10129&20070\\
        \hline
    \end{array}
\end{displaymath}
\caption{The number $a(n)$ of subgroups of the symmetric group $S_n$ up to conjugacy in $S_n$, and the number $b(n)$ of nonabelian subgroups of $S_n$ up to conjugacy.}\label{Tb:Subgroups}
\end{table}

The first step of our algorithm therefore consists of a determination of subgroups of the symmetric group $S_n$ up to conjugation. This is a nontrivial task. Fortunately, \texttt{GAP} \cite{GAP} can determine these subgroups for $n\le 12$ in a matter of minutes and for $n=13$ in a matter of a few hours. The results are summarized in Table \ref{Tb:Subgroups}. The state of the art results in this area are due to Holt \cite{Holt} who counted (but did not list) subgroups of $S_n$ for $n\le 18$, both absolutely and up to conjugation in $S_n$. To illustrate Holt's results, there are $7598016157515302757$ subgroups of $S_{18}$ partitioned into $7274651$ conjugacy classes.

\bigskip

In Section \ref{Sc:Representation} we prove that there is a one-to-one correspondence between racks (resp. quandles) defined on $X$ and so-called rack envelopes (resp. quandle envelopes) defined on $X$. A \emph{rack envelope} (resp. \emph{quandle envelope}) is a tuple
\begin{displaymath}
    (G,(\lambda_x:x\in X/G))
\end{displaymath}
such that $G\le S_X$, $\lambda_x\in C_G(G_x)$ (resp. $\lambda_x\in Z(G_x)$) for every $x\in X/G$, and $\langle \bigcup_{x\in X/G}\lambda_x^G\rangle = G$. If the last condition is dropped, we speak of rack and quandle folders. It does not seem to be easy to determine without an explicit check if a given folder is in fact an envelope, greatly complicating our enumeration.

The main idea behind rack and quandle envelopes is due to Blackburn \cite[Section 2]{Blackburn} who attributed it in part to Joyce. A special case of quandle envelopes for connected quandles was described in \cite{HSV}, where the ``envelope'' terminology for quandles originated. Although we have independently rediscovered Blackburn's representation of racks and quandles while working on this project, the full credit for the results of Subsection \ref{Ss:Envelopes} should go to Blackburn \cite{Blackburn}.

The isomorphism problem for rack and quandle envelopes is solved in Subsection \ref{Ss:IsomorphismProblem} in terms of an explicit group action \eqref{Eq:OnTuples} of the normalizer $N_{S_X}(G)$. The same group also acts on rack and quandle folders, where the orbits of the action are easier to understand (see Section \ref{Sc:Burnside}).

For any $G\le S_X$ the set of rack/quandle folders $(G,(\lambda_x:x\in X/G))$ is nonempty, containing at least the trivial folder with $\lambda_x=1$ for every $x\in X/G$. Call $G\le S_X$ \emph{rack/quandle admissible} if the set of rack/quandle envelopes over $G$ is also nonempty. Equivalently, $G\le S_X$ is rack/quandle admissible if and only if there is a rack/quandle $(X,*)$ such that $\lmlt{X,*}=G$. We touch upon the question ``Which subgroups $G\le S_X$ are rack/quandle admissible?'' in Subsection \ref{Ss:Admissible} but further investigation would be of considerable interest.

\bigskip

Our algorithm is presented in Section \ref{Sc:Algorithm}, first in a simplified form in Subsection \ref{Ss:BasicAlgorithm} and then with essential improvements. For racks, given a subgroup $G$ of $S_n$, the algorithm returns orbit representatives of the action \eqref{Eq:OnTuples} of the normalizer $N_{S_n}(G)$ on the parameter space
\begin{displaymath}
    \mathrm{Fol}_r(G) = \prod_{x\in X/G} C_G(G_x)
\end{displaymath}
consisting of rack folders, disqualifying those folders that are not envelopes. For quandles, we use quandle folders
\begin{displaymath}
    \mathrm{Fol}_q(G) = \prod_{x\in X/G} Z(G_x)
\end{displaymath}
as the parameter space.

The main obstacle in the algorithm is the fact that the parameter spaces can be quite large. For instance, there exists a subgroup $G$ of $S_{12}$ isomorphic to an elementary abelian $2$-group for which $\mathrm{Fol}_q(G)$ has over $1$ billion elements. The default orbit-stabilizer theorem of \texttt{GAP} cannot cope with spaces this large since it first attempts to convert the action into a permutation action. Nevertheless, it is possible to determine the orbits by employing careful indexing of the space of rack/quandle folders, using one bit of memory for every folder. This is described in Subsection \ref{Ss:Indexing}.

We can further take advantage of the indexing (and other improvements) to minimize storage space for the library of racks and quandles. For instance, the library of $36265070$ racks of order $11$ is stored in a compressed file of approximately 5.7 megabytes, with each rack requiring only $1.25$ bits of storage on average. More details are given in the actual library.

The action \eqref{Eq:OnTuples} is of local character in the following sense. If $f\in N_{S_X}(G)$ and
\begin{displaymath}
    (\kappa_x:x\in X/G) = (\lambda_x:x\in X/G)f,
\end{displaymath}
a given $\kappa_x$ can be calculated once a single $\lambda_z$ is known, namely the $\lambda_z$ with $z$ in the same orbit of $G$ as $xf^{-1}$. This observation is exploited in Subsection \ref{Ss:Precalculated}, where we show how to precalculate the action to crucially speed up the algorithm.

\bigskip

The algorithm is powerful enough to construct all isomorphism types of racks of order $n\le 8$ and quandles of order $n\le 9$ in a matter of seconds, verifying the counts reported in \cite{OEIS}. It takes about a day to determine isomorphism types of racks of order $11$ and about three days to determine isomorphism types of quandles of order $12$.

The algorithm spends most of its running time dealing with elementary abelian $2$-groups contained in $S_n$ and it is finally overwhelmed while trying to determine $r(12)$ and/or $q(13)$. Fortunately, as far as counting of racks and quandles is concerned, all abelian subgroups of $S_n$ can be excluded from the search since they yield precisely $2$-reductive racks and $2$-reductive quandles, which can be counted more efficiently by the methods of \cite{JPSZ} using a different representation and Burnside's Lemma. As we have already mentioned, Jedli\v{c}ka provided us with the numbers $r_{\textrm{$2$-red}}(n)$ and $q_{\textrm{$2$-red}}(n)$ for $n\le 14$. (We have independently verified $r_{\textrm{$2$-red}}(n)$ for $n\le 11$ and $q_{\textrm{$2$-red}}(n)$ for $n\le 12$.)

To determine $r(12)$, $r(13)$ and $q(13)$, we therefore consider only nonabelian subgroups of symmetric groups and obtain $r_{\textrm{non-$2$-red}}(12)$, $r_{\textrm{non-$2$-red}}(13)$ and $q_{\textrm{non-$2$-red}}(13)$ together with the corresponding representatives of isomorphism types. Memory management remains important even in the nonabelian case. For instance, there is a nonabelian subgroup $G$ of $S_{13}$ for which $\mathrm{Fol}_r(G)$ has over $2$ billion elements. It took about two weeks of computing time to determine the most difficult case, $r_{\textrm{non-$2$-red}}(13)$, and thus $r(13)$.

To determine $r_{\textrm{med}}(n)$, we explicitly construct all non-$2$-reductive racks $(X,*)$ of order $n$ and count only those with $\mathrm{Dis}(X,*)$ abelian. We add this count to $r_{\textrm{$2$-red}}(n)$ since every $2$-reductive rack is medial. We proceed similarly for medial quandles.

\bigskip

In Section \ref{Sc:Burnside} we show how to efficiently count the orbits of $N_{S_X}(G)$ on the space of rack or quandle folders. The problem that we really need to solve, namely efficiently counting orbits of $N_{S_X}(G)$ on the space of rack or quandle envelopes, remains open.

Recall that if a finite group $F$ acts on a set $Y$ and $\mathrm{Fix}(Y,f)=\{y\in Y:yf=y\}$ is the set of $f$-invariant elements of $Y$, then Burnside's Lemma states that
\begin{displaymath}
    |Y/F| = \frac{1}{|F|}\sum_{f\in F}|\mathrm{Fix}(Y,f)|.
\end{displaymath}
In our setting we let $F=N_{S_X}(G)$ and $Y=\mathrm{Fol}_r(G)$, the quandle case being similar. For every $f\in N_{S_X}(G)$ we then construct a certain $|X/G|$-partite digraph $\Gamma_r(G,f)$ that encodes the action of $\langle f\rangle$ on $Y$. We describe the structure of $\Gamma_r(G,f)$ in detail and prove that the elements of $\mathrm{Fix}(Y,f)$ are in one-to-one correspondence with unions of certain short directed cycles of $\Gamma_r(G,f)$. These short cycles can be easily counted as long as $X$ is not too large.

\bigskip

The paper concludes with several open problems.

\section{Isomorphisms and conjugation for left quasigroups}\label{Sc:Conjugacy}

For a subgroup $G$ of $S_X$ let $\mathcal L(G)$ be the set of all left quasigroups defined on $X$ whose left multiplication group is equal to $G$.

The following result is likely well-known and it applies to the special cases of racks and quandles:

\begin{proposition}\label{Pr:Conjugation}
Let $X$ be a set and $(X,*)$, $(X,\circ)$ left quasigroups. Then:
\begin{enumerate}
\item[(i)] A bijection $f\in S_X$ is an isomorphism $f:(X,*)\to (X,\circ)$ if and only if $L^\circ_{xf} = (L^*_x)^f$ for every $x\in X$.
\item[(ii)] If $f:(X,*)\to (X,\circ)$ is an isomorphism, then $\lmlt{X,\circ} = (\lmlt{X,*})^f$.
\item[(iii)] If $G$, $H$ are subgroups of $S_X$ that are not conjugate, then no left quasigroup in $\mathcal L(G)$ is isomorphic to a left quasigroup in $\mathcal L(H)$.
\item[(iv)] If $G$, $H$ are conjugate subgroups of $S_X$, then $\mathcal L(G)$ and $\mathcal L(H)$ contain the same isomorphism types of left quasigroups.
\end{enumerate}
\end{proposition}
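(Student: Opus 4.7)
The heart of the proposition is part (i); parts (ii)--(iv) will cascade from it in a largely formal way. For (i), I plan to start from the defining property that $f$ is a homomorphism, namely $(x*y)f = (xf)\circ(yf)$ for all $x,y\in X$, and rewrite both sides in terms of left translations using the right-action convention $yL_x = x*y$ adopted in the paper. This turns the homomorphism identity into $(yL^*_x)f = (yf)L^\circ_{xf}$, an equation in $y$. Since $f$ is a bijection and $y$ is arbitrary, this is equivalent to the operator identity $L^*_x f = f L^\circ_{xf}$, which rearranges to $L^\circ_{xf} = f^{-1}L^*_x f = (L^*_x)^f$. Reading the implications in both directions yields the claimed equivalence; the only care required is tracking conventions so no inverse is misplaced.

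Granted (i), part (ii) is immediate: as $x$ ranges over $X$ so does $xf$, hence
\[
\lmlt{X,\circ} = \langle L^\circ_{xf} : x\in X\rangle = \langle (L^*_x)^f : x\in X\rangle = f^{-1}\lmlt{X,*}f = (\lmlt{X,*})^f.
\]
Part (iii) is then essentially the contrapositive of (ii): any isomorphism $f:(X,*)\to(X,\circ)$ with $(X,*)\in\mathcal L(G)$ and $(X,\circ)\in\mathcal L(H)$ would force $H = G^f$, so $G$ and $H$ would be conjugate.

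For (iv), I would fix $f\in S_X$ with $H = G^f$ and transport structure explicitly: given $(X,*)\in\mathcal L(G)$, declare $L^\circ_{xf} := (L^*_x)^f$ for every $x\in X$ and let this define a binary operation $\circ$ on $X$ (every element of $X$ is uniquely of the form $xf$, and each $(L^*_x)^f$ is a bijection, so $(X,\circ)$ is a left quasigroup). Part (i) then says $f:(X,*)\to(X,\circ)$ is an isomorphism, and part (ii) identifies $\lmlt{X,\circ} = G^f = H$, so $(X,\circ)\in\mathcal L(H)$. Running the same construction with $f^{-1}$ (using $G = H^{f^{-1}}$) provides the reverse assignment, giving mutually inverse transports between the isomorphism types in $\mathcal L(G)$ and those in $\mathcal L(H)$. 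I do not anticipate a real obstacle: the whole proposition is essentially a bookkeeping translation between the homomorphism condition and conjugation of left translations, the only delicate moment being the handling of the right-action convention in the equivalence chain for (i).
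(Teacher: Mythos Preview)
Your proposal is correct and follows essentially the same approach as the paper: the paper also derives (i) by rewriting the homomorphism identity in terms of left translations (substituting $yf^{-1}$ for $y$ rather than passing through the operator identity, a cosmetic difference), obtains (ii) and (iii) exactly as you do, and for (iv) transports $(X,*)$ along $f$ via the formula $x\circ y = (xf^{-1}*yf^{-1})f$, which is the same construction as your $L^\circ_{xf} := (L^*_x)^f$.
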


\begin{proof}
(i) We have $xf\circ yf = (x*y)f$ for every $x$, $y\in X$ if and only if $yL_{xf}^\circ = xf\circ y = (x*yf^{-1})f = yf^{-1}L_x^*f = y(L_x^*)^f$ for every $x$, $y\in X$.

(ii) Using (i), we have $\lmlt{X,\circ} = \langle L^\circ_x:x\in X\rangle = \langle (L^*_{xf^{-1}})^f:x\in X\rangle = \langle L^*_{xf^{-1}}:x\in X\rangle^f = \langle L^*_x:x\in X\rangle^f = (\lmlt{X,*})^f$. Part (iii) now follows, too.

For (iv), suppose that $H=G^f$ for some $f\in S_X$ and let $(X,*)\in\mathcal L(G)$. Then $f:(X,*)\to (X,\circ)$ is an isomorphism, where $(X,\circ)$ is defined by $x\circ y = (xf^{-1}*yf^{-1})f$. Since $\lmlt{X,\circ} = (\lmlt{X,*})^f = G^f = H$ by (ii), we have $(X,\circ)\in\mathcal L(H)$. This shows $\mathcal L(G)\subseteq\mathcal L(H)$ and the other inclusion is proved analogously.
\end{proof}

Consequently, to classify left quasigroups defined on $X$ up to isomorphism, it suffices to:
\begin{itemize}
\item calculate subgroups of $S_X$ up to conjugacy,
\item for each such subgroup, $G$, determine the isomorphism types in $\mathcal L(G)$,
\item return the (necessarily disjoint) union of the isomorphism types.
\end{itemize}

Proposition \ref{Pr:Conjugation} immediately implies:

\begin{corollary}\label{Cr:Iso}
Let $(X,*)$, $(X,\circ)$ be left quasigroups such that $\lmlt{X,*}=\lmlt{X,\circ}=G$. Then every isomorphism $f:(X,*)\to (X,\circ)$ satisfies $f\in N_{S_X}(G)$.
\end{corollary}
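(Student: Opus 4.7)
The plan is to apply Proposition \ref{Pr:Conjugation}(ii) directly. Suppose $f : (X,*) \to (X,\circ)$ is an isomorphism of left quasigroups. By part (ii) of the proposition we have $\lmlt{X,\circ} = (\lmlt{X,*})^f$. Since by hypothesis both multiplication groups coincide with $G$, this reads $G = G^f = f^{-1}Gf$, which is exactly the condition $f \in N_{S_X}(G)$.

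There is essentially nothing else to do: the corollary is just a tautological rephrasing of part (ii) in the special case where the two left multiplication groups happen to be identical (not merely conjugate). No case analysis, no additional hypothesis on racks or quandles is needed, because Proposition \ref{Pr:Conjugation} already applies to arbitrary left quasigroups. In particular, I do not expect any obstacle; the only step worth writing explicitly is the one-line computation $G = \lmlt{X,\circ} = (\lmlt{X,*})^f = G^f$, followed by the remark that $G^f = G$ is the definition of $f \in N_{S_X}(G)$.
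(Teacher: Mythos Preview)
Your proof is correct and matches the paper's approach exactly: the paper simply states that Proposition~\ref{Pr:Conjugation} immediately implies the corollary, and your one-line deduction from part~(ii) is precisely the intended argument.
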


\section{Rack and quandle envelopes up to isomorphism}\label{Sc:Representation}

In this section we obtain a one-to-one correspondence between racks and quandles defined on $X$ and certain configurations in $S_X$, called rack and quandle envelopes. We also solve the isomorphism problem for envelopes.

\subsection{Rack and quandle envelopes}\label{Ss:Envelopes}

For a rack $(X,*)$ with $\lmlt{X,*}=G$ let
\begin{displaymath}
    \mathbf E(X,*) = (G,(L_x:x\in X/G)).
\end{displaymath}

\begin{lemma}\label{Lm:ToEnvelope}
Let $(X,*)$ be a rack and $G=\lmlt{X,*}$. Then $(X,*)$ is determined by $\mathbf E(X,*)$. Moreover, $L_x\in C_G(G_x)$ for every $x\in X$ and $G=\langle\bigcup_{x\in X/G} L_x^G\rangle$. If $(X,*)$ is a quandle then $L_x\in Z(G_x)$ for every $x\in X$.
\end{lemma}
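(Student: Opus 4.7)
The plan is to extract everything from a single conjugation identity: for any automorphism $f$ of $(X,*)$ and any $x \in X$, we have $L_x^f = L_{xf}$. A direct computation using the conventions $yL_x = x*y$ and $g^f = f^{-1}gf$ gives $y(f^{-1}L_x f) = (x*(yf^{-1}))f = (xf)*y = yL_{xf}$. Since the rack property is equivalent to $G = \lmlt{X,*} \le \aut{X,*}$, the identity specializes to $L_x^g = L_{xg}$ for every $g \in G$ and $x \in X$.

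From this, each assertion of the lemma follows quickly. For $L_x \in C_G(G_x)$: any $g \in G_x$ satisfies $xg = x$, so $L_x = L_{xg} = L_x^g = g^{-1}L_x g$, i.e., $g$ commutes with $L_x$. To see that $(X,*)$ is determined by $\mathbf E(X,*)$: for any $y \in X$ pick $x \in X/G$ and $h \in G$ with $y = xh$; then $L_y = L_{xh} = L_x^h = h^{-1}L_x h$, so the list $(L_x : x \in X/G)$ together with $G$ encodes every $L_y$, hence the operation $x*y = yL_x$. Well-definedness is automatic because the right-hand side must equal the actual $L_y$ of the original rack, independent of the choice of $h$; equivalently, if $xh_1 = xh_2$ then $h_1 h_2^{-1} \in G_x$ commutes with $L_x$ by the step just proved.

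For the generation claim, the identity gives $L_x^G = \{L_x^g : g \in G\} = \{L_{xg} : g \in G\} = \{L_y : y \in xG\}$, so $\bigcup_{x \in X/G} L_x^G = \{L_y : y \in X\}$ and therefore $G = \langle L_y : y \in X\rangle = \langle \bigcup_{x \in X/G} L_x^G\rangle$ by the very definition of the left multiplication group. For the quandle strengthening, idempotence yields $xL_x = x*x = x$, so $L_x \in G_x$; combined with $L_x \in C_G(G_x)$, this forces $L_x \in G_x \cap C_G(G_x) = Z(G_x)$.

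The argument is essentially a one-identity affair. The only point needing genuine attention is the conjugation identity $L_x^g = L_{xg}$, which packages the rack property into a single transparent formula; once it is on the table, the rest of the lemma consists of substitutions. I foresee no real obstacle beyond keeping the right-action and conjugation conventions straight.
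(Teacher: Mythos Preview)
Your proof is correct and follows essentially the same approach as the paper: both arguments hinge on the conjugation identity $L_x^g = L_{xg}$ for $g\in G\le\aut{X,*}$, from which all four claims (determination by $\mathbf E(X,*)$, $L_x\in C_G(G_x)$, the generation statement, and the quandle refinement $L_x\in Z(G_x)$) are read off directly. You simply spell out the verification of the identity and the well-definedness check a bit more explicitly than the paper does.
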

\begin{proof}
If $x\in X/G$ and $y\in xG$ are given, let $g\in G$ be such that $xg = y$. Since $g\in G=\lmlt{X,*}\le\aut{X,*}$, we have $(L_x)^g = L_{xg} = L_y$. Hence $(X,*)$ is determined by $\mathbf E(X,*)$. Moreover, $L_x\in C_G(G_x)$ since for any $g\in G_x$ we have $(L_x)^g = L_{xg} =L_x$. Finally, $G=\lmlt{X,*} = \langle L_x:x\in X\rangle = \langle (L_x)^g:x\in X/G,\,g\in G\rangle = \langle\bigcup_{x\in X/G}L_x^G\rangle$. If $(X,*)$ is a quandle, we have also $L_x\in G_x$ (since $x*x=x$) and thus $L_x\in Z(G_x)$.
\end{proof}

\begin{definition}
Let $G\le S_X$. Then $(G,(\lambda_x:x\in X/G))$ is a \emph{rack folder} (resp. \emph{quandle folder}) if $\lambda_x\in C_G(G_x)$ (resp. $\lambda_x\in Z(G_x)$) for every $x\in X/G$. A rack folder (resp. quandle folder) is a \emph{rack envelope} (resp. \emph{quandle envelope}) if $\langle \bigcup_{x\in X/G}\lambda_x^G\rangle = G$.
\end{definition}

Given $G\le S_X$ and $\Lambda=(\lambda_x\in G:x\in X/G)$, we attempt to define a groupoid
\begin{displaymath}
    \mathbf R(G,\Lambda)=(X,*)
\end{displaymath}
by setting
\begin{displaymath}
    L_y = (\lambda_x)^{g_y},
\end{displaymath}
where $x\in X/G$, $y\in xG$ and $g_y$ is any element of $G$ such that $xg_y=y$.

\begin{proposition}\label{Pr:Folder}
Let $G\le S_X$, $\Lambda = (\lambda_x\in G:x\in X/G)$ and $(X,*)=\mathbf R(G,\Lambda)$. Then:
\begin{enumerate}
\item[(i)] $(X,*)$ is well-defined if and only if $(G,\Lambda)$ is a rack folder, in which case $(X,*)$ is a rack satisfying $L_x=\lambda_x$ for every $x\in X/G$ and $\lmlt{X,*} = \langle \bigcup_{x\in X/G}\lambda_x^G\rangle\le G$,
\item[(ii)] $(X,*)$ is a well-defined quandle if and only if $(G,\Lambda)$ is a quandle folder.
\end{enumerate}
\end{proposition}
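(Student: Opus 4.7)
The plan is to first analyze well-definedness of the construction $\mathbf R(G,\Lambda)$, then verify the rack (and quandle) axioms by showing that $G$ itself acts by automorphisms of the resulting groupoid.

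For well-definedness, I would start by fixing $x\in X/G$ and $y\in xG$, and comparing the values $\lambda_x^{g_y}$ arising from two choices $g_y$, $g_y'\in G$ with $xg_y=xg_y'=y$. The element $h=g_y'g_y^{-1}$ satisfies $xh=x$, so $h\in G_x$, and conversely every element of $g_yG_x$ is a valid choice. Hence $L_y$ is independent of the chosen $g_y$ if and only if $\lambda_x^h=\lambda_x$ for all $h\in G_x$, i.e.\ $\lambda_x\in C_G(G_x)$. Taking $y=x$ already forces this condition, so well-definedness of $(X,*)$ is equivalent to $(G,\Lambda)$ being a rack folder. Taking $g_x=1$ for $x\in X/G$ gives $L_x=\lambda_x$ immediately.

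Assuming $(G,\Lambda)$ is a rack folder, each $L_y$ is an element of $G\le S_X$, hence a bijection, so $(X,*)$ is a left quasigroup. To see $(X,*)$ is a rack, I would verify the stronger statement $G\le\aut{X,*}$, which by Proposition~\ref{Pr:Conjugation}(i) (applied to $(X,*)=(X,\circ)$) reduces to $L_{yg}=L_y^g$ for every $y\in X$ and $g\in G$. Given $y\in xG$ with $x\in X/G$ and $xg_y=y$, the element $g_yg$ sends $x$ to $yg$, so we may take $g_{yg}=g_yg$, and then
\begin{displaymath}
    L_{yg} = \lambda_x^{g_yg} = (\lambda_x^{g_y})^g = L_y^g,
\end{displaymath}
as required. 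Since each $L_y\in G\le\aut{X,*}$, the left self-distributive law holds and $(X,*)$ is a rack. The formula $\lmlt{X,*}=\langle\bigcup_{x\in X/G}\lambda_x^G\rangle$ follows because $L_y$ ranges precisely over the $G$-conjugates of the $\lambda_x$ as $y$ ranges over $X$, and this subgroup lies in $G$ by construction.

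For part (ii), given that $(X,*)$ is a well-defined rack, idempotency $x*x=x$ is equivalent to $L_x\in G_x$ for every $x\in X$. For $x\in X/G$ this says $\lambda_x\in G_x$, and combined with $\lambda_x\in C_G(G_x)$ from part (i) yields $\lambda_x\in Z(G_x)$. For a general $y=xh$ with $x\in X/G$, the condition $L_y\in G_y$ unwinds via $L_y=\lambda_x^h$ and $G_y=h^{-1}G_xh$ to the same requirement $\lambda_x\in G_x$. So $(X,*)$ is a quandle if and only if $\lambda_x\in Z(G_x)$ for every $x\in X/G$, i.e.\ $(G,\Lambda)$ is a quandle folder.

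The main obstacle is purely bookkeeping: one must keep the right-action convention $yL_x=x*y$ and the conjugation convention $g^f=f^{-1}gf$ consistent when unpacking what ``$f$ is an automorphism'' means in multiplicative terms, so that the key identity $L_{yg}=L_y^g$ reads off cleanly from the very definition of $L_y=\lambda_x^{g_y}$. Once the convention is pinned down, each part reduces to a one-line conjugation calculation.
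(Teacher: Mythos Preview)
Your proof is correct and follows essentially the same approach as the paper's. The only cosmetic differences are that you frame the rack verification as ``$G\le\aut{X,*}$'' (proving $L_{yg}=L_y^g$ for all $g\in G$) rather than directly checking $L_{u*v}=L_v^{L_u}$ and then the self-distributive identity as the paper does---this is the same one-line conjugation computation---and a harmless slip where the coset of valid choices should read $G_xg_y$ rather than $g_yG_x$.
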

\begin{proof}
(i) Suppose that $(X,*)$ is well-defined and let $x\in X/G$. For every $g_x\in G_x$ we have $\lambda_x^{g_x} = L_x = \lambda_x$, where the latter equality follows by taking $g_x=1$. Thus $\lambda_x \in C_G(G_x)$. Conversely, suppose that $\lambda_x\in C_G(G_x)$ holds for every $x\in X/G$ and let $g$, $h\in G$ be such that $xg=xh$. Since $gh^{-1}\in G_x$, we have $\lambda_x^{gh^{-1}}=\lambda_x$, that is, $\lambda_x^g = \lambda_x^h$, and $(X,*)$ is well-defined.

Now suppose that $(X,*)$ is well-defined, necessarily a left quasigroup. We claim that $(X,*)$ is a rack. Fix $u$, $v$, $w\in X$ and let $x\in X/G$, $g_v\in G$ be such that $xg_v=v$. Since $g_vL_u\in G$ satisfies $xg_vL_u = vL_u = u*v$, we have $L_{u*v} = \lambda_x^{g_vL_u} = (\lambda_x^{g_v})^{L_u} = L_v^{L_u}$ and hence
\begin{displaymath}
    (u*v)*(u*w) = wL_uL_{u*v} = wL_uL_v^{L_u} = wL_vL_u = u*(v*w).
\end{displaymath}
We certainly have $\lmlt{X,*} = \langle \bigcup_{x\in X/G}L_x^G\rangle = \langle \bigcup_{x\in X/G}\lambda_x^G\rangle \le G$.

(ii) If $(X,*)$ is a well-defined quandle then $(G,\Lambda)$ is a rack folder by (i) and for every $x\in X/G$ we have $x\lambda_x = x$, that is, $\lambda_x\in C_G(G_x)\cap G_x = Z(G_x)$. Conversely, if $(G,\Lambda)$ is a quandle folder then it is a rack folder, $(X,*)$ is a rack by (i), and for every $x\in X/G$, $y\in xG$ and $g_y\in G$ such that $xg_y=y$ we have $yL_y = y\lambda_x^{g_y} = yg_y^{-1}\lambda_x g_y = x\lambda_x g_y = xg_y = y$, where we have used $\lambda_x\in G_x$ in the penultimate step.
\end{proof}

\begin{theorem}[Correspondence between racks/quandles and rack/quandle envelopes]\label{Th:Correspondence}
Let $X$ be a nonempty set and $G\le S_X$. There is a one-to-one correspondence between racks/quandles $(X,*)$ satisfying $\lmlt{X,*}=G$ and rack/quandle envelopes $(G,\Lambda)$. Given a rack/quandle $(X,*)$ with $\lmlt{X,*}=G$, the corresponding rack/quandle envelope is $\mathbf E(X,*)$. Given a rack/quandle envelope $(G,\Lambda)$, the corresponding rack/quandle is $\mathbf R(G,\Lambda)$.
\end{theorem}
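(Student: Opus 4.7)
The plan is to assemble Theorem \ref{Th:Correspondence} from Lemma \ref{Lm:ToEnvelope} and Proposition \ref{Pr:Folder}, which between them have already established almost everything nontrivial. All that remains is to package the two maps $\mathbf E$ and $\mathbf R$ and check that they are mutually inverse. I will treat racks and quandles in parallel throughout, since the quandle statements are obtained from the rack statements by replacing $C_G(G_x)$ with $Z(G_x)$.

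First I would verify that $\mathbf E$ takes a rack (resp. quandle) $(X,*)$ with $\lmlt{X,*}=G$ to a rack (resp. quandle) envelope over $G$. This is immediate from Lemma \ref{Lm:ToEnvelope}, which supplies the two defining conditions $\lambda_x = L_x \in C_G(G_x)$ (resp. $L_x \in Z(G_x)$) for every $x\in X/G$, together with $G = \langle\bigcup_{x\in X/G}L_x^G\rangle$. Symmetrically, I would verify that $\mathbf R$ takes a rack (resp. quandle) envelope $(G,\Lambda)$ to a rack (resp. quandle) on $X$ whose left multiplication group is exactly $G$: Proposition \ref{Pr:Folder} guarantees that $\mathbf R(G,\Lambda)$ is a well-defined rack (resp. quandle) with $\lmlt{\mathbf R(G,\Lambda)} = \langle\bigcup_{x\in X/G}\lambda_x^G\rangle$, and the envelope hypothesis upgrades this to the equality $\lmlt{\mathbf R(G,\Lambda)} = G$.

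It only remains to check that $\mathbf E$ and $\mathbf R$ are mutually inverse. The composite $\mathbf E\circ\mathbf R$ is handled by the clause of Proposition \ref{Pr:Folder}(i) stating that $L_x = \lambda_x$ for every $x\in X/G$ in $\mathbf R(G,\Lambda)$, so feeding $\mathbf R(G,\Lambda)$ back through $\mathbf E$ returns the tuple $\Lambda$ unchanged. For $\mathbf R\circ\mathbf E$, given a rack $(X,*)$ with $\lmlt{X,*}=G$, it suffices to match all left translations, since two left quasigroups on $X$ coincide as soon as their left translations do. For $x\in X/G$, $y\in xG$, and $g_y\in G$ with $xg_y=y$, the rack $\mathbf R(\mathbf E(X,*))$ sets $L_y = L_x^{g_y}$ by definition, while in the original rack $g_y\in G\le\aut{X,*}$ gives $L_y = L_x^{g_y}$ as well (compare the opening line of the proof of Lemma \ref{Lm:ToEnvelope}). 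The real obstacle in this program is not in the present theorem but in the preceding Proposition \ref{Pr:Folder}(i), where one has to show simultaneously that the prescription $L_y := \lambda_x^{g_y}$ is independent of the choice of $g_y$ and that it yields the left self-distributive law; once those checks are in hand, the theorem itself is a matter of bookkeeping.
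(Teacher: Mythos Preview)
Your proposal is correct and follows essentially the same approach as the paper: both arguments assemble the bijection from Lemma~\ref{Lm:ToEnvelope} and Proposition~\ref{Pr:Folder}, verifying that $\mathbf E$ and $\mathbf R$ land in the right codomains and are mutual inverses. The only cosmetic difference is in the check that $\mathbf R\circ\mathbf E=\mathrm{id}$: the paper notes that $(X,*)$ and $\mathbf R(\mathbf E(X,*))$ share $G$ and agree on $L_x$ for $x\in X/G$, then invokes the ``determined by $\mathbf E(X,*)$'' clause of Lemma~\ref{Lm:ToEnvelope}, whereas you unfold that clause and match all left translations directly via $L_y=L_x^{g_y}$---the same computation in either case.
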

\begin{proof}
Suppose that $(X,*)$ is a rack/quandle with $\lmlt{X,*}=G$. By Lemma \ref{Lm:ToEnvelope}, $\mathbf E(X,*) = (G,(L_x^*:x\in X/G))$ is a rack/quandle envelope and $G=\langle\bigcup_{x\in X/G}(L_x^*)^G\rangle$. By Proposition \ref{Pr:Folder}, $(X,\circ)=\mathbf R(\mathbf E(X,*))$ is a rack/quandle satisfying $L_x^\circ = L_x^*$ for every $x\in X/G$ and $\lmlt{X,\circ} = \langle\bigcup_{x\in X/G}(L_x^*)^G\rangle = G$. Lemma \ref{Lm:ToEnvelope} then implies that $(X,*)=(X,\circ)$.

Conversely, suppose that $\Lambda=(\lambda_x:x\in X/G)$ and $(G,\Lambda)$ is a rack/quandle envelope. By Proposition \ref{Pr:Folder}, $(X,*)=\mathbf R(G,\Lambda)$ is a rack/quandle satisfying $L_x^*=\lambda_x$ for every $x\in X/G$ and $\lmlt{X,*}=\langle \bigcup_{x\in X/G}\lambda_x^G\rangle = G$, where the last equality holds because $(G,\Lambda)$ is an envelope. Finally, we have $\mathbf E(\mathbf R(G,\Lambda)) = \mathbf E(X,*) = (G,(L_x^*:x\in X/G)) = (G,\Lambda)$.
\end{proof}

Note that we do not claim that there is a one-to-one correspondence between rack (or quandle) folders $(G,\Lambda)$ and racks (or quandles) $(X,*)$ satisfying $\lmlt{X,*}\le G$. Indeed, if $(G,\Lambda)$ is a rack folder  then there might exist several racks $(X,*)$ such that $L_x^*=\lambda_x$ for every $x\in X/G$ and $\lmlt{X,*}\le G$. (Let $G=S_X$, $X/G=\{x_0\}$ and $\lambda_{x_0}=1$. Then $(G,\Lambda)$ is a rack folder and any rack $(X,*)$ satisfying $L_{x_0}^*=1$ does the job.) Conversely, if $(X,*)$ is a rack such that $\lmlt{X,*}\le G$, it might not be the case that $L_x^*\in C_G(G_x)$ for every $x\in X/G$. (Consider a rack with some $L_x^*\ne 1$ but take $G=S_X$ with $|X|$ large enough so that $C_G(G_x)=1$.)

\subsection{Envelopes up to isomorphism}\label{Ss:IsomorphismProblem}

We present a solution to the isomorphism problem for rack and quandle envelopes. Thanks to Proposition \ref{Pr:Conjugation}, it suffices to consider the case when the two corresponding racks have the same left multiplication groups.

\begin{proposition}\label{Pr:Iso}
Let $(G,(\lambda_x:x\in X/G))$ and $(G,(\kappa_x:x\in X/G))$ be rack/quandle envelopes. For every $x\in X/G$ and $y\in xG$ let $g_y\in G$ be such that $xg_y=y$. Then the corresponding racks/quandles are isomorphic if and only if there is $f\in N_{S_X}(G)$ such that
\begin{equation}\label{Eq:Action}
    \kappa_x = ((\lambda_{yg_y^{-1}})^{g_y})^f
\end{equation}
for every $x\in X/G$, where $y = xf^{-1}$.
\end{proposition}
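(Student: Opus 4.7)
The plan is to translate Proposition~\ref{Pr:Conjugation}(i) and Corollary~\ref{Cr:Iso} into the envelope language via the correspondence of Theorem~\ref{Th:Correspondence}. By Theorem~\ref{Th:Correspondence}, the given envelopes correspond to racks/quandles $(X,*)=\mathbf R(G,(\lambda_x))$ and $(X,\circ)=\mathbf R(G,(\kappa_x))$, both with left multiplication group equal to $G$. Corollary~\ref{Cr:Iso} shows that any isomorphism $f:(X,*)\to(X,\circ)$ must lie in $N_{S_X}(G)$, and Proposition~\ref{Pr:Conjugation}(i) characterizes such $f\in N_{S_X}(G)$ as isomorphisms precisely when $L_{xf}^\circ=(L_x^*)^f$ for all $x\in X$. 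Substituting $x\mapsto xf^{-1}$, this reads $L_x^\circ=(L_{xf^{-1}}^*)^f$ for all $x\in X$, which is the global form of the identity to be matched.

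Next, I would re-express this identity using envelope data. For $x\in X/G$ one has $L_x^\circ=\kappa_x$ by Proposition~\ref{Pr:Folder}(i). For arbitrary $y\in X$ with orbit representative $\bar y=yg_y^{-1}\in X/G$, the definition of $\mathbf R$ yields $L_y^*=\lambda_{\bar y}^{g_y}=(\lambda_{yg_y^{-1}})^{g_y}$. Taking $y=xf^{-1}$ with $x\in X/G$ thus transforms the global identity into the stated local one
\[
    \kappa_x=((\lambda_{yg_y^{-1}})^{g_y})^f;
\]
conversely, the local identity is just the global identity restricted to the transversal $X/G$.

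The main obstacle, and really the only point that is not pure bookkeeping, is the reduction of the full check on $X$ to the check on $X/G$. For $f\in N_{S_X}(G)$, pick $x\in X/G$, $h\in G$, and set $w=xh$. Since $G\le\aut{X,*}$ and $G\le\aut{X,\circ}$, left translations transform equivariantly under $G$; in particular $L_w^\circ=(L_x^\circ)^h$. Because $h':=fhf^{-1}\in G$ we have $wf^{-1}=xf^{-1}h'$, so $L_{wf^{-1}}^*=(L_{xf^{-1}}^*)^{h'}$ and
\[
    (L_{wf^{-1}}^*)^f=(L_{xf^{-1}}^*)^{h'f}=(L_{xf^{-1}}^*)^{fh}.
\]
Hence the identity at $w$ is the identity at $x$ conjugated by $h$, so validity on the transversal $X/G$ propagates to all of $X$. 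The quandle case requires no separate treatment: the constraint $\lambda_x\in Z(G_x)$ is built into the envelope data and does not interact with the isomorphism argument.
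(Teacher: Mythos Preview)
Your proof is correct and follows essentially the same approach as the paper: both invoke Corollary~\ref{Cr:Iso} to restrict to $f\in N_{S_X}(G)$, then use Proposition~\ref{Pr:Conjugation}(i) in the form $L_x^\circ=(L_{xf^{-1}}^*)^f$, and finally rewrite both sides via the envelope data. The only difference is in the reduction from all $x\in X$ to the transversal $X/G$: the paper dispatches this in one line by citing Lemma~\ref{Lm:ToEnvelope} (a rack with left multiplication group $G$ is determined by the $L_x$ for $x\in X/G$, and the transported rack also has left multiplication group $G$ since $f$ normalizes $G$), whereas you carry out the propagation explicitly by conjugating the identity at $x$ by $h\in G$ to obtain the identity at $w=xh$. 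Your explicit computation is a perfectly valid unpacking of what Lemma~\ref{Lm:ToEnvelope} encodes.
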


\begin{proof}
Let $(X,*)$ and $(X,\circ)$ be the racks/quandles corresponding to $(G,(\lambda_x:x\in X/G))$ and $(G,(\kappa_x:x\in X/G))$, respectively. By Corollary \ref{Cr:Iso}, $(X,*)$ is isomorphic to $(X,\circ)$ if and only if there is an isomorphism $f:(X,*)\to (X,\circ)$ such that $f\in N_{S_X}(G)$. By Proposition \ref{Pr:Conjugation}, $f$ is an isomorphism if and only if $L_{xf}^\circ = (L_x^*)^f$ for every $x\in X$, or, equivalently, $L_x^\circ = (L_{xf^{-1}}^*)^f$ for every $x\in X$. In fact, since $(X,\circ)$ is determined by $G$ and the left translations $L_x^\circ$ with $x\in X/G$ (see Lemma \ref{Lm:ToEnvelope}), the last condition can be equivalently restated as $L_x^\circ = (L_{xf^{-1}}^*)^f$ for every $x\in X/G$.

Let $x\in X/G$. We certainly have $\kappa_x = L_x^\circ$. Now, $y = xf^{-1}$ is not necessarily in $X/G$, but $yg_y^{-1}$ is an element of $X/G$ (possibly distinct from $x$), so
\begin{displaymath}
   (L^*_{xf^{-1}})^f = (L^*_y)^f = ((L^*_{yg_y^{-1}})^{g_y})^f = ((\lambda_{yg_y^{-1}})^{g_y})^f,
\end{displaymath}
finishing the proof.
\end{proof}

For $G\le S_X$, let
\begin{displaymath}
    \mathrm{Fol}_r(G),\quad \mathrm{Fol}_q(G),\quad \mathrm{Env}_r(G), \quad \mathrm{Env}_q(G)
\end{displaymath}
be, respectively, the sets of all rack folders, quandle folders, rack envelopes and quandle envelopes on $X$ of the form $(G,\Lambda)$. Given $f\in N_{S_X}(G)$ and an element $(\lambda_x:x\in X/G)$ of one of the above spaces, we define
\begin{equation}\label{Eq:OnTuples}
    (\lambda_x:x\in X/G)f = (\kappa_x:x\in X/G),
\end{equation}
where for every $x\in X/G$ the bijection $\kappa_x$ is obtained by \eqref{Eq:Action}.

\begin{theorem}[Rack/quandle envelopes up to isomorphism]\label{Th:Action}
Let $X$ be a nonempty set, $G\le S_X$ and $F=N_{S_X}(G)$. Then:
\begin{enumerate}
\item[(i)] The group $F$ acts on each of $\mathrm{Fol}_r(G)$, $\mathrm{Fol}_q(G)$, $\mathrm{Env}_r(G)$ and $\mathrm{Env}_q(G)$ via \eqref{Eq:OnTuples}.
\item[(ii)] The orbits of $F$ on $\mathrm{Env}_r(G)$ (resp. $\mathrm{Env}_q(G)$) are in one-to-one correspondence with isomorphism types of racks (resp. quandles) defined on $X$ with left multiplication groups equal to $G$.
\item[(iii)] If an orbit of $F$ on $\mathrm{Fol}_r(G)$ (resp. $\mathrm{Fol}_q(G)$) contains an element of $\mathrm{Env}_r(G)$ (resp. $\mathrm{Env}_q(G)$), then the entire orbit is a subset of $\mathrm{Env}_r(G)$ (resp. $\mathrm{Env}_q(G)$).
\end{enumerate}
\end{theorem}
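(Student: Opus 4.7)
For (i), I would verify well-definedness, space preservation, and the group action axioms in turn. Independence of \eqref{Eq:Action} from the choice of $g_y$ follows because two admissible choices differ by left-multiplication by an element of $G_z$, where $z = yg_y^{-1} \in X/G$, and this difference is absorbed by the folder condition $\lambda_z \in C_G(G_z)$. For space preservation I would use $f \in N_{S_X}(G)$ to transport stabilizers, $(G_z)^{g_y f} = (G_y)^f = G_x$, so that conjugating $\lambda_z$ by $g_y f$ lands in $C_G(G_x)$; the quandle case combines this with $\lambda_z \in G_z$ to conclude $\kappa_x \in G_x \cap C_G(G_x) = Z(G_x)$. The identity of $S_X$ acts trivially via the choice $g_x = 1$ when $y = x$.

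The composition axiom $(\Lambda f)f' = \Lambda(ff')$ will be the main obstacle and requires the longest calculation. For each $x \in X/G$, I would unwind both sides using \eqref{Eq:Action}. The crucial point is the factorization $gf^{-1} = f^{-1}(fgf^{-1})$ for $g \in G$, with $fgf^{-1} \in G$ because $f \in N_{S_X}(G)$; this lets one rewrite $x(ff')^{-1} = x(f')^{-1}f^{-1}$ so that the orbit representatives arising in the unwinding of both sides coincide. Once this alignment is made, the remaining conjugation exponents agree up to an element of the relevant stabilizer, and the well-definedness established above absorbs that ambiguity.

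Part (ii) is a short synthesis of results already established. By Theorem \ref{Th:Correspondence}, rack (resp.\ quandle) envelopes $(G,\Lambda)$ biject with racks (resp.\ quandles) $(X,*)$ satisfying $\lmlt{X,*}=G$. By Corollary \ref{Cr:Iso}, any isomorphism between two such structures lies in $F = N_{S_X}(G)$. By Proposition \ref{Pr:Iso}, $f \in F$ is such an isomorphism precisely when the two envelopes are related by \eqref{Eq:Action}, i.e., precisely when they lie in the same $F$-orbit. Chaining these three statements yields the desired one-to-one correspondence.

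For (iii), the key observation is that $f \in N_{S_X}(G)$ permutes the $G$-orbits of $X$, so $x \mapsto z(x) := xf^{-1}g_{xf^{-1}}^{-1}$ is a bijection of $X/G$. Using $fG = Gf$, I would compute $\kappa_x^G = (\lambda_{z(x)})^{Gf} = (\lambda_{z(x)}^G)^f$, whence
\[
\Big\langle \bigcup_{x \in X/G} \kappa_x^G \Big\rangle = \Big\langle \bigcup_{z \in X/G} \lambda_z^G \Big\rangle^f.
\]
If $(G,\Lambda)$ is an envelope, the right-hand side equals $G^f = G$, so $(G,\kappa)$ is also an envelope. Therefore every $F$-orbit on $\mathrm{Fol}_r(G)$ (resp.\ $\mathrm{Fol}_q(G)$) that meets $\mathrm{Env}_r(G)$ (resp.\ $\mathrm{Env}_q(G)$) lies entirely within it.
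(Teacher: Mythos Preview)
Your proof is correct, but it takes a more computational route than the paper's. The paper proves (i) for folders semantically: given a folder $(G,\Lambda)$, it forms the rack $(X,*)=\mathbf R(G,\Lambda)$, transports it along $f$ to obtain an isomorphic rack $(X,\circ)$, and observes that $\Lambda f$ is exactly $(L^\circ_x:x\in X/G)$; the folder property for $\Lambda f$ then comes for free from Proposition~\ref{Pr:Folder}, and the group action axioms are never checked directly because they are inherited from composition of isomorphisms. Part~(iii) is then a one-line consequence of the fact that $F$ already acts on $\mathrm{Env}_r(G)$. Your approach instead verifies well-definedness, space preservation, and the action axioms by explicit conjugation bookkeeping, and proves~(iii) by the direct calculation $\bigcup_x\kappa_x^G=(\bigcup_z\lambda_z^G)^f$. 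The paper's route is shorter and explains \emph{why} the formula \eqref{Eq:OnTuples} behaves well (it is the shadow of transport of structure), at the cost of relying on the correspondence $\mathbf R$; your route is self-contained and makes every step visible, which is useful if one wants to implement or generalize the action without reference to the rack it encodes.
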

\begin{proof}
Proposition \ref{Pr:Iso} settles part (ii) and also part (i) for the case of rack and quandle envelopes. Does $F$ act on rack/quandle folders? Suppose that $(G,\Lambda)$ is a rack/quandle folder and let $(X,*)=\mathbf R(G,\Lambda)$. Let $f\in F$ and let $(X,\circ)$ be such that $f:(X,*)\to (X,\circ)$ is an isomorphism. Using the same notation as in the proof of Proposition \ref{Pr:Iso}, we have $L^\circ_x = (L^*_{xf^{-1}})^f = ((\lambda_{yg_y^{-1}})^{g_y})^f$, which means that $(\lambda_x:x\in X/G)f = (L^\circ_x:x\in X/G)$. By Proposition \ref{Pr:Folder}, $(G,(L^\circ_x:x\in X/G))$ is a rack/quandle folder. This proves (i). Part (iii) follows.
\end{proof}


\subsection{Remarks on rack and quandle admissibility}\label{Ss:Admissible}

A subgroup $G\le S_X$ is said to be \emph{rack admissible} (resp. \emph{quandle admissible}) if there is a rack (resp. quandle) $(X,*)$ such that $\lmlt{X,*}=G$. Note that it is necessary to keep track of the way $G$ acts on $X$, not just of the isomorphism type of $G$.

\begin{proposition}\label{Pr:Admissible}
Let $G\le S_X$. If $G$ is rack admissible then $\bigcup_{x\in X/G} (C_G(G_x))^G$ generates $G$. If $G$ is quandle admissible then $\bigcup_{x\in X/G} Z(G_x)^G$ generates $G$.
\end{proposition}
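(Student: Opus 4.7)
The proposition is essentially a direct corollary of Lemma \ref{Lm:ToEnvelope}, so the plan is to unpack admissibility and invoke that lemma.

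First I would translate the hypothesis: assuming $G$ is rack admissible means picking an actual rack $(X,*)$ with $\lmlt{X,*}=G$. Applying Lemma \ref{Lm:ToEnvelope} to $(X,*)$ gives two pieces of information simultaneously: the left translations satisfy $L_x \in C_G(G_x)$ for every $x \in X/G$, and $G = \langle \bigcup_{x \in X/G} L_x^G \rangle$. This is exactly what we need, because for each $x \in X/G$ the conjugacy class $L_x^G = \{f^{-1}L_x f : f \in G\}$ is contained in $\bigcup_{f \in G} f^{-1}C_G(G_x)f = (C_G(G_x))^G$. Hence
\begin{displaymath}
    \bigcup_{x \in X/G} L_x^G \;\subseteq\; \bigcup_{x \in X/G} (C_G(G_x))^G,
\end{displaymath}
and since the left-hand side already generates $G$, so does the right-hand side.

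For the quandle case I would argue identically, using the final clause of Lemma \ref{Lm:ToEnvelope}, which strengthens $L_x \in C_G(G_x)$ to $L_x \in Z(G_x)$ when $(X,*)$ is a quandle. Then $L_x^G \subseteq Z(G_x)^G$ and the same enlargement-of-generating-set step finishes the proof.

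There is no real obstacle here; the proposition is essentially a restatement of the generation clause in Lemma \ref{Lm:ToEnvelope} at the coarser level of centralizers/centers rather than the specific $L_x$. The only thing worth spelling out is the elementary observation that enlarging a generating set to any superset preserves the fact that it generates, together with the containment $L_x^G \subseteq (C_G(G_x))^G$ (resp.\ $Z(G_x)^G$), which is immediate from $L_x \in C_G(G_x)$ (resp.\ $L_x \in Z(G_x)$). No construction of new elements, no additional computation, and no appeal to Proposition \ref{Pr:Folder} or Theorem \ref{Th:Correspondence} is needed.
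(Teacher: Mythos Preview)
Your proof is correct and is essentially the same argument as the paper's: obtain elements $\lambda_x\in C_G(G_x)$ (your $L_x$) whose $G$-conjugates generate $G$, then pass to the larger set $\bigcup_{x\in X/G}(C_G(G_x))^G$. The only cosmetic difference is that the paper invokes Theorem~\ref{Th:Correspondence} to produce a rack envelope while you go directly through Lemma~\ref{Lm:ToEnvelope}, which is in fact the underlying ingredient.
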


\begin{proof}
If $G$ is rack admissible then by Theorem \ref{Th:Correspondence} there is a rack envelope $(G,(\lambda_x:x\in G/X))$. Since $\lambda_x\in C_G(G_x)$ for every $x\in X/G$, we have $G = \langle \bigcup_{x\in X/G}\lambda_x^G\rangle \le \langle \bigcup_{x\in X/G} (C_G(G_x))^G\rangle\le G$. The quandle case is similar.
\end{proof}

The necessary condition of Proposition \ref{Pr:Admissible} disqualifies many ``large'' subgroups of $S_X$ from the search for racks and quandles. Certainly $S_X$ itself is disqualified:

\begin{corollary}
Let $|X|\ge 4$. Then $S_X$ as a subgroup of itself is not rack admissible.
\end{corollary}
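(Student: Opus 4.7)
The plan is to derive a contradiction from the necessary condition supplied by Proposition~\ref{Pr:Admissible}. Since $S_X$ acts transitively on $X$, the set of orbit representatives is a singleton, say $X/S_X=\{x_0\}$. Thus the criterion of Proposition~\ref{Pr:Admissible} reduces to asking whether $C_{S_X}(G_{x_0})^{S_X}$ generates $S_X$, where $G_{x_0}=\mathrm{Stab}_{S_X}(x_0)$ is isomorphic (as a subgroup of $S_X$) to the symmetric group on $X\setminus\{x_0\}$ extended by fixing $x_0$.

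The core step is therefore a direct computation showing that $C_{S_X}(G_{x_0})=1$ whenever $|X|\ge 4$. I would argue as follows. Suppose $f\in S_X$ centralizes $G_{x_0}$. First I would show $f$ must fix $x_0$: if $x_0f=y\ne x_0$, then because $|X\setminus\{x_0\}|\ge 3$ there exists $g\in G_{x_0}$ that does not fix $y$; but $x_0(fg)=yg\ne y=x_0(gf)$, contradicting $fg=gf$. Hence $f\in G_{x_0}$, so $f\in Z(G_{x_0})$, which is trivial for $|X\setminus\{x_0\}|\ge 3$ since the symmetric group on at least three letters has trivial center.

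Having established $C_{S_X}(G_{x_0})=1$, the union $\bigcup_{x\in X/S_X}(C_{S_X}(G_x))^{S_X}$ equals $\{1\}$ and thus generates the trivial subgroup, not $S_X$. By the contrapositive of Proposition~\ref{Pr:Admissible}, $S_X$ is not rack admissible.

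The only mildly delicate point is the centralizer calculation, and it is routine once one observes that $|X|\ge 4$ guarantees both $|X\setminus\{x_0\}|\ge 3$ (used to produce a witness $g$ moving any prescribed point) and that $S_{X\setminus\{x_0\}}$ has trivial center. No other obstacles arise.
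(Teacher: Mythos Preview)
Your proof is correct and follows exactly the same approach as the paper: use transitivity of $S_X$ to reduce to a single orbit, compute $C_{S_X}(G_{x_0})=1$, and conclude via Proposition~\ref{Pr:Admissible}. The only difference is that you spell out the centralizer computation in detail, whereas the paper simply asserts $C_{S_X}(S_{X\setminus\{x\}})=1$ for $|X|\ge 4$.
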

\begin{proof}
The group $G=S_X$ acts transitively on $X$. For $x\in X$, $G_x$ is isomorphic to $S_{X\setminus\{x\}}$. Since $|X|\ge 4$, we have $C_G(G_x) \cong C_{S_X}(S_{X\setminus\{x\}}) = 1$ and hence $(C_G(G_x))^G=1$ does not generate $G$.
\end{proof}

It is well-known that every rack (resp. quandle) of order $n$ embeds into a rack (resp. quandle) of order $n+1$, proving that both $r$ and $q$ are non-decreasing functions. We give a short argument based on envelopes. It is certainly possible to give an elementary proof on the level of multiplication tables.

\begin{proposition}
Let $X$ be a set and $z$ an element not contained in $X$. Every rack/quandle on $X$ embeds into a rack/quandle on $X\cup\{z\}$.
\end{proposition}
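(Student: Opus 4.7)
The plan is to reduce the statement to Theorem \ref{Th:Correspondence}: represent $(X,*)$ by its envelope $\mathbf E(X,*)$ over the group $G=\lmlt{X,*}$, and then build an envelope on $X'=X\cup\{z\}$ whose associated rack (resp.\ quandle) contains $(X,*)$ as a subalgebra.

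First I would extend $G$ to act on $X'$ by declaring that every $g\in G$ fixes $z$; call the resulting subgroup $G'\le S_{X'}$, which is canonically isomorphic to $G$ via this extension. Because every element of $G'$ fixes $z$, the $G'$-orbits on $X'$ are exactly the $G$-orbits on $X$ together with $\{z\}$, and the point stabilizer $G'_z$ equals all of $G'$. A convenient transversal is therefore $X'/G'=(X/G)\cup\{z\}$, and for $x\in X/G$ the stabilizer $G'_x$ coincides with $G_x$ under the identification $G\cong G'$.

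Write $\mathbf E(X,*)=(G,(\lambda_x:x\in X/G))$ and define a tuple $\Lambda'=(\lambda'_u:u\in X'/G')$ by
\begin{displaymath}
    \lambda'_x=\lambda_x\text{ for }x\in X/G,\qquad \lambda'_z=1.
\end{displaymath}
Then $(G',\Lambda')$ is a rack folder (resp.\ quandle folder): the condition $\lambda'_x\in C_{G'}(G'_x)$ (resp.\ $\lambda'_x\in Z(G'_x)$) transports verbatim from the analogous condition on $\lambda_x$ in $G$, while $\lambda'_z=1$ trivially lies in $Z(G'_z)=Z(G')$. It is in fact an envelope, because under the identification $G\cong G'$ we have $\lambda_x^{G'}=\lambda_x^{G}$, and $\bigcup_{x\in X/G}\lambda_x^G$ already generates $G$ since $\mathbf E(X,*)$ is an envelope.

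By Theorem \ref{Th:Correspondence}, $(X',\circ):=\mathbf R(G',\Lambda')$ is a rack (resp.\ quandle). To see that the inclusion $X\hookrightarrow X'$ is an embedding of $(X,*)$, note that for any $y\in X$, if $x\in X/G$ and $g_y\in G$ satisfy $xg_y=y$, then by definition of $\mathbf R$ the left translation $L^\circ_y=(\lambda'_x)^{g_y}=(\lambda_x)^{g_y}$ acts on $X$ exactly as the original $L^*_y$ (and fixes $z$). Therefore $y\circ y'=y*y'$ for all $y,y'\in X$, and $X$ is a subrack (resp.\ subquandle) of $X'$; in the quandle case the identity $z\circ z=z$ reduces to $z\cdot 1=z$. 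The only subtlety is keeping the identifications between $G$ and $G'$, between $G_x$ and $G'_x$, and between the conjugacy classes $\lambda_x^G$ and $\lambda_x^{G'}$ straight; once these are fixed, verification of the folder/envelope axioms and of the fact that the inclusion respects $*$ is immediate.
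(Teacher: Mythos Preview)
Your proof is correct and follows essentially the same approach as the paper: extend $G$ to act on $X\cup\{z\}$ by fixing $z$, keep the same $\lambda_x$ for $x\in X/G$, set $\lambda_z=1$, and observe that the resulting tuple is again an envelope. You are slightly more explicit than the paper in verifying that the inclusion $X\hookrightarrow X'$ is a homomorphism, but the construction and the key verifications are identical.
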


\begin{proof}
Let $\overline{X}=X\cup\{z\}$. For $\sigma\in S_X$ define $\overline{\sigma}\in S_{\overline{X}}$ by $x\overline{\sigma}=x$ if $x\in X$ and $z\overline{\sigma}=z$. Let $(G,(\lambda_x:x\in X/G))$ be a rack envelope. The orbits of $\overline{G}$ are the same as those of $G$, except for the additional singleton orbit $\{z\}$. For $x\in X/G$ let $\kappa_x = \overline{\lambda_x}$ and observe that $\kappa_x\in C_{\overline{G}}(\overline{G}_x)$ because $\lambda_x\in C_G(G_x)$. Let $\kappa_z$ be the identity on $\overline{X}=X\cup\{z\}$, clearly satisfying $\kappa_z\in C_{\overline{G}}(\overline{G}_z)$. Then $(\overline{G},(\kappa_x:x\in\overline{X}/\overline{G}))$ is a rack envelope. The quandle case is similar.
\end{proof}

\section{The algorithm}\label{Sc:Algorithm}

\subsection{A basic algorithm}\label{Ss:BasicAlgorithm}

It follows from Proposition \ref{Pr:Conjugation}, Theorem \ref{Th:Correspondence} and Theorem \ref{Th:Action} that the algorithm in Figure \ref{Fg:BasicAlg} returns a complete set of isomorphism types of racks of order $n$ (more precisely, rack envelopes on $X=\{1,\dots,n\}$).

\begin{figure}[ht]
\begin{displaymath}
\begin{array}{l}
\hline
\phantom{x}\\
{\scriptscriptstyle 01}\quad X \leftarrow \{1,\dots,n\}\\
{\scriptscriptstyle 02}\quad \mathcal G \leftarrow \text{subgroups of $S_X$ up to conjugation in $S_X$}\\
{\scriptscriptstyle 03}\quad \texttt{for } G \texttt{ in } \mathcal G \texttt{ do}\\
{\scriptscriptstyle 04}\quad \tab \texttt{if MightBeRackAdmissible}(G,X) \texttt{ then}\\
{\scriptscriptstyle 05}\quad \tab\tab Y\leftarrow \mathrm{Fol}_r(G) = \prod_{x\in X/G} C_G(G_x)\\
{\scriptscriptstyle 06}\quad \tab\tab O\leftarrow \text{orbit representatives of the action of $N_{S_X}(G)$ on $Y$ given by \eqref{Eq:OnTuples}}\\
{\scriptscriptstyle 07}\quad \tab\tab R_G\leftarrow \{(G,(\lambda_x:x\in X/G))\in O: \langle \bigcup_{x\in X/G}\lambda_x^G\rangle = G\}\\
{\scriptscriptstyle 08}\quad \tab\texttt{else}\\
{\scriptscriptstyle 09}\quad \tab\tab R_G\leftarrow\emptyset\\
{\scriptscriptstyle 10}\quad \tab\texttt{end if}\\
{\scriptscriptstyle 11}\quad \texttt{end for}\\
{\scriptscriptstyle 12}\quad \texttt{return } \bigcup_{G\in\mathcal G}R_G\\
\phantom{x}\\
\hline
\end{array}
\end{displaymath}
\caption{A basic algorithm for enumeration of racks and quandles up to isomorphism.}\label{Fg:BasicAlg}
\end{figure}

The function \texttt{MightBeRackAdmissible}$(G,X)$ in line $04$ returns \texttt{true} iff $\langle\bigcup_{x\in X/G}(C_G(G_x))^G\rangle = G$, cf. Proposition \ref{Pr:Admissible}. Note that \texttt{true} might be returned even if $G\le S_X$ is in fact not rack admissible.

Theorem \ref{Th:Action}(iii) guarantees that it is safe to discard the entire orbit of $(G,\Lambda)$ in line $07$ if $(G,\Lambda)$ is not a rack envelope.

For quandles, it suffices to modify the algorithm as follows:
\begin{itemize}
\item replace \texttt{MightBeRackAdmissible}$(G,X)$ in line $04$ with \texttt{MightBeQuandleAdmissible}$(G,X)$, which returns \texttt{true} iff $\langle\bigcup_{x\in X/G}Z(G_x)^G\rangle = G$, and
\item populate the variable $Y$ in line $05$ with $\mathrm{Fol}_q(G) = \prod_{x\in X/G}Z(G_x)$.
\end{itemize}

We remark that the algorithm will struggle when $N_{S_X}(G)$ is large, which tends to happen when $G$ is either very small or very large. The extreme case $G=1$ (which yields $N_{S_X}(G)=S_X$) can be handled separately since then the set $\mathrm{Fol}_r(G)$ is a singleton. Large subgroups of $S_X$ are typically disqualified by failing the necessary condition of Proposition \ref{Pr:Admissible}.

\subsection{Indexing}\label{Ss:Indexing}

In some computational packages it is possible to define the action \eqref{Eq:OnTuples} on the domain $Y$ and then call standard methods for orbit representatives. We encounter two difficulties. The space $Y$ can be too large (more than billion elements for some $G\le S_{12}$) to be stored in memory. Even if $Y$ can be stored in memory, the default methods of convert the action into a permutation action on $\{1,\dots,|Y|\}$ and the algorithm might run out of memory then. In this subsection we will show how to make the algorithm work on larger domains than the default methods would allow. A similar approach to indexing and actions on large domains is implemented in \texttt{GAP}, cf. methods \texttt{PositionCanonical} and \texttt{OrbitStabilizerAlgorithm}.

Let $G\le S_X$. We will efficiently index the space $Y=\prod_{x\in X/G} C_G(G_x)$ of rack folders, the case of quandle folders being similar. (However, it is not easy to index elements of the subset of rack envelopes or quandle envelopes.) Let $<$ be a lexicographical order on $Y$ inherited from an linear order on $S_X$. Let $p(C_G(G_x),\lambda_x)$ be the position of $\lambda_x$ in $C_G(G_x)$ with respect to $<$. Then $\Lambda = (\lambda_x:x\in X/G)\in Y$ can be identified with the numerical vector $(p(C_G(G_x),\lambda_x):x\in X/G)$, which can in turn be identified with an element of the interval $\{1,\dots,|Y|\}$, for instance by using a hybrid-base expansion. It is then easy to implement the conversion functions \texttt{Element}$(Y,i)$ (that returns the $i$th element of $Y$) and \texttt{Position}$(Y,\Lambda)$ (that returns the position of $\Lambda$ in $Y$).



Given $G\le S_X$ and $Y=\mathrm{Fol}_r(G)$, the algorithm in Figure \ref{Fg:ImprovedAlg} returns the desired orbit representatives of the action of $N_{S_X}(G)$ on $Y$; it can be used to replace lines $06$ and $07$ of the algorithm in Figure \ref{Fg:BasicAlg}. Note that thanks to the indexing functions described above we do not need to keep $Y$ in memory, only the much smaller subsets $C_G(G_x)$ and a binary vector of length $|Y|$.

\begin{figure}[h]
\begin{displaymath}
\begin{array}{l}
\hline
\phantom{x}\\
{\scriptscriptstyle 06:01}\quad V\leftarrow \text{binary array of length $|Y|$ with all values initialized to \texttt{true}}\\
{\scriptscriptstyle 06:02}\quad \texttt{for } i \texttt{ in } \{1,\dots,|Y|\} \texttt{ do}\\
{\scriptscriptstyle 06:03}\quad \tab \texttt{if } V[i] = \texttt{true then}\\
{\scriptscriptstyle 06:04}\quad \tab\tab \Lambda = (\lambda_x:x\in X/G) \leftarrow \texttt{Element}(Y,i)\\
{\scriptscriptstyle 06:05}\quad \tab\tab \texttt{if } \langle\bigcup_{x\in X/G}\lambda_x^G\rangle\ne G \texttt{ then}\\
{\scriptscriptstyle 06:06}\quad \tab\tab\tab V[i]\leftarrow \texttt{false}\\
{\scriptscriptstyle 06:07}\quad \tab\tab \texttt{end if}\\
{\scriptscriptstyle 06:08}\quad \tab\tab \texttt{for } f \texttt{ in } N_{S_X}(G) \texttt{ do}\\
{\scriptscriptstyle 06:09}\quad \tab\tab\tab p\leftarrow \texttt{Position}(Y,\Lambda f)\\
{\scriptscriptstyle 06:10}\quad \tab\tab\tab \texttt{if } p>i \texttt{ then}\\
{\scriptscriptstyle 06:11}\quad \tab\tab\tab\tab V[p]\leftarrow \texttt{false}\\
{\scriptscriptstyle 06:12}\quad \tab\tab\tab \texttt{end if}\\
{\scriptscriptstyle 06:13}\quad \tab\tab \texttt{end for}\\
{\scriptscriptstyle 06:14}\quad \tab\texttt{end if}\\
{\scriptscriptstyle 06:15}\quad \texttt{end for}\\
{\scriptscriptstyle 06:16}\quad R_G\leftarrow \{(G,\texttt{Element}(Y,i)):1\le i\le |Y|,\,V[i]=\texttt{true}\}\\
{\scriptscriptstyle 06:17}\quad \texttt{return } R_G\\
\phantom{x}\\
\hline
\end{array}
\end{displaymath}
\caption{An algorithm for orbit representatives of $N_{S_X}(G)$ on $Y=\mathrm{Fol}_r(G)$.}\label{Fg:ImprovedAlg}
\end{figure}

The test in line $06{:}05$ ensures that folders $(G,\Lambda)$ that are not envelopes will not be returned. Note that the value of $p$ in line $06{:}09$ can never be less than $i$.

\subsection{Precalculating the action of $N_{S_X}(G)$}\label{Ss:Precalculated}

It is time consuming to calculate $\Lambda f$ in line $06{:}09$, inside the innermost cycle of the algorithm. In this subsection we will show how the algorithm can be substantially improved by precalculating the action of $N_{S_X}(G)$ on the space $Y$. We will take advantage of the fact that the action \eqref{Eq:OnTuples} can be localized, i.e., when
\begin{displaymath}
    (\kappa_x:x\in X/G) = (\lambda_x:x\in X/G)f,
\end{displaymath}
the value of $\kappa_x$ depends only on $y=xf^{-1}$, $g_y$ and $\lambda_{yg_y^{-1}}$, rather than on the entire folder $(\lambda_x:x\in X/G)$.

Suppose that $G\le S_X$ is given. Given $f\in N_{S_X}(G)$ and $x\in X/G$, let $y = xf^{-1}$ and $z\in (X/G)\cap yG$, and for every $\lambda_z\in C_G(G_z)$ let us precalculate
\begin{displaymath}
    I(f,x,\lambda_z) = p(C_G(G_x),((\lambda_z)^{g_y})^f).
\end{displaymath}
Line $06{:}09$ can then be replaced with the code in Figure \ref{Fg:Position}.

\begin{figure}
\begin{displaymath}
\begin{array}{l}
\hline
\phantom{x}\\
{\scriptscriptstyle 06:09:01}\quad \texttt{for } x\in X/G \texttt{ do}\\
{\scriptscriptstyle 06:09:02}\quad \tab y\leftarrow xf^{-1}\\
{\scriptscriptstyle 06:09:03}\quad \tab z\leftarrow \text{the unique element of $(X/G)\cap yG$}\\
{\scriptscriptstyle 06:09:04}\quad \tab p_x\leftarrow I(f,x,\lambda_z)\\
{\scriptscriptstyle 06:09:05}\quad \texttt{end for}\\
{\scriptscriptstyle 06:09:06}\quad p\leftarrow \text{the index of $(p_x:x\in X/G)$ as an element of $\{1,\dots,|Y|\}$}\\
\phantom{x}\\
\hline
\end{array}
\end{displaymath}
\caption{Code for the position of $(\lambda_x:x\in X/G)f$ in $\mathrm{Fol}_r(G)$ with precalculated action.}\label{Fg:Position}
\end{figure}

We have now finished describing the main features of the algorithm by which we have obtained the isomorphism types of racks (resp. quandles) of orders $\le 11$ (resp. $\le 12$). We have explained in the Introduction how the cases $r(12)$, $r(13)$ and $q(13)$ were handled.

\section{Counting orbits of the action on rack and quandle folders}\label{Sc:Burnside}

In this section we visualize the action \eqref{Eq:OnTuples} of $N_{S_X}(G)$ on $\mathrm{Fol}_r(G)$ and count its orbits, the case of quandle folders being similar. Unfortunately, this approach does not solve the orbit counting problem on the space of rack and quandle envelopes.

Let $G\le S_X$, $F=N_{S_X}(G)$ and $f\in F$. Construct a digraph (possibly with loops) $\Gamma_r(G,f)$ as follows. The vertex set of $\Gamma_r(G,f)$ is the disjoint union of the sets $C_G(G_x)$ for $x\in X/G$. (The sets $C_G(G_x)$ are not disjoint as subsets of $S_X$, but we will treat them as being formally disjoint for the digraph construction.) Given not necessarily distinct $x$, $z\in X/G$ and $\kappa_x\in C_G(G_x)$, $\lambda_z\in C_G(G_z)$, we declare $\lambda_z\to\kappa_x$ to be a directed edge of $\Gamma_r(G,f)$ if and only if with $y=xf^{-1}$ we have $z=yg_y^{-1}$ and $\kappa_x = ((\lambda_z)^{g_y})^f$.

The digraph $\Gamma_r(G,f)$ can be seen as a visualization of the action of $\langle f\rangle$ on $\mathrm{Fol}_r(G)$. The elements $(\lambda_x:x\in X/G)$ of $\mathrm{Fol}_r(G)$ correspond precisely to selections of vertices of $\Gamma_r(G,f)$, one in each vertex set $C_G(G_x)$. When $(\kappa_x:x\in X/G)=(\lambda_x:x\in X/G)f$, then the tuple $(\kappa_x:x\in X/G)$ is obtained from $(\lambda_x:x\in X/G)$ by moving away from every $\lambda_x$ along the (unique) directed edge starting at $\lambda_x$.

Before we describe some properties of $\Gamma_r(G,f)$, let us observe that $f\in N_{S_X}(G)$ induces a permutation of $X/G$. Indeed, suppose that $x$, $y\in G$ belong to the same orbit of $G$, so $x=yg$ for some $g\in G$. Then $xff^{-1}gf=xgf=yf$ and $f^{-1}gf\in G$ show that $xf$, $yf$ belong to the same orbit of $G$.

\begin{proposition}\label{Pr:Digraph}
Let $X$ be a finite set, $G\le S_X$, $f\in N_{S_X}(G)$ and $\Gamma = \Gamma_r(G,f)$. Let $\bar{f}$ be the permutation of $X/G$ induced by $f$. Then:
\begin{enumerate}
\item[(i)] $\Gamma$ is an $|X/G|$-partite digraph with parts $\{C_G(G_x):x\in X/G\}$.
\item[(ii)] For $x$, $z\in X/G$, there exists an edge from $C_G(G_z)$ to $C_G(G_x)$ if and only if $\bar{f}$ maps $zG$ to $xG$.
\item[(iii)] The digraph induced by $\Gamma$ on $X/G$ by collapsing every vertex set $C_G(G_x)$ into a single vertex is a disjoint union of directed cycles, namely the cycle decomposition of $\bar{f}$.
\item[(iv)] Every vertex of $\Gamma$ has indegree equal to $1$ and outdegree equal to $1$, so $\Gamma$ is a union of disjoint directed cycles.
\end{enumerate}
\end{proposition}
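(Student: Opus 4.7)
The plan is to dispatch (i) and (ii) directly from the definition of $\Gamma=\Gamma_r(G,f)$, derive (iii) as a formal consequence of (ii) together with the fact that $\bar f$ is a permutation of $X/G$, and then handle (iv) by exhibiting an explicit inverse to ``follow the unique outgoing edge.''

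For (i) there is nothing to do beyond recording the construction: the vertex set was declared to be the disjoint union of the $C_G(G_x)$, and every edge $\lambda_z\to\kappa_x$ goes from the $z$-part to the $x$-part, so $\Gamma$ is $|X/G|$-partite with these parts. For (ii), the edge condition forces $z=yg_y^{-1}$ with $y=xf^{-1}$; equivalently $zG=xf^{-1}G$, and applying $\bar f$ to both sides (using $\bar f(xf^{-1}G)=xG$ as recalled just before the proposition) this rewrites as $\bar f(zG)=xG$. Conversely, whenever $\bar f(zG)=xG$ any $\lambda_z\in C_G(G_z)$ produces the edge $\lambda_z\to(\lambda_z^{g_y})^f$, so both implications hold. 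Part (iii) is then immediate: collapsing each $C_G(G_x)$ to a point turns $\Gamma$ into the functional digraph of $\bar f$, which, $\bar f$ being a permutation of the finite set $X/G$, is a disjoint union of directed cycles realizing the cycle decomposition of $\bar f$.

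For (iv), fix $\lambda_z\in C_G(G_z)$ and let $x\in X/G$ be the unique vertex with $\bar f(zG)=xG$, $y=xf^{-1}$. The edge definition assigns to $\lambda_z$ the single candidate $\kappa_x:=(\lambda_z^{g_y})^f$, and I would verify membership $\kappa_x\in C_G(G_x)$ by the chain
\begin{displaymath}
C_G(G_z)^{g_y}=C_G(G_z^{g_y})=C_G(G_{zg_y})=C_G(G_y),\qquad C_G(G_y)^f=C_G(G_y^f)=C_G(G_x),
\end{displaymath}
using that $g_y\in G$ and $f\in N_{S_X}(G)$ both normalize $G$, and that $G_y^f=G_x$ (a direct two-line check from $y=xf^{-1}$). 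This shows the outdegree of every vertex is exactly $1$. Solving the defining equation for $\lambda_z$ in terms of $\kappa_x$ gives the explicit inverse $\kappa_x\mapsto(\kappa_x^{f^{-1}})^{g_y^{-1}}$, which by the same centralizer computation lands in $C_G(G_z)$; hence each vertex also has indegree exactly $1$, and $\Gamma$ is a disjoint union of directed cycles.

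The main technical obstacle I anticipate is the bookkeeping for (iv): tracking the conjugation actions of $g_y\in G$ and $f\in N_{S_X}(G)$ on stabilizers and centralizers, and checking that the value of $\lambda_z^{g_y}$ is independent of the chosen $g_y$ with $zg_y=y$. This independence is, however, precisely the well-definedness already established in Section~\ref{Sc:Representation} (it underlies the definition $L_y=\lambda_z^{g_y}$), so it may be invoked rather than re-proved, leaving only the elementary conjugation identities above.
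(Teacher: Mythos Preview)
Your proposal is correct and follows essentially the same route as the paper: (i) from the construction, (ii) from the edge definition forcing $z$ to be the orbit representative of $xf^{-1}$, (iii) as an immediate consequence, and (iv) from the bijectivity of $\lambda_z\mapsto(\lambda_z^{g_y})^f$. The only noteworthy difference is in (iv): the paper argues uniqueness of incoming and outgoing edges directly (leaving the existence of an outgoing edge, i.e.\ the membership $(\lambda_z^{g_y})^f\in C_G(G_x)$, implicit via the already-established action of $N_{S_X}(G)$ on folders in Theorem~\ref{Th:Action}), whereas you verify that membership explicitly via the centralizer/stabilizer conjugation chain and then exhibit the inverse map---a more self-contained but otherwise equivalent argument.
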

\begin{proof}
Part (i) follows from the definition of $\Gamma$. For (ii), note that the identity permutation $1$ is present in every $C_G(G_x)$ and if $y=xf^{-1}$, $z=yg_y^{-1}$, $\kappa_x=1$ and $\lambda_z=1$, then $\kappa_x = ((\lambda_z)^{g_y})^f$. Part (iii) follows.

For (iv), suppose that $\lambda_z\to \kappa_x$ and $\mu_z\to\kappa_x$ are edges and let $y=xf^{-1}$ as usual. Then $((\lambda_z)^{g_y})^f = \kappa_x = ((\mu_z)^{g_y})^f$ and therefore $\lambda_z=\mu_z$. Dually, if $\lambda_z\to\kappa_x$ and $\lambda_z\to\nu_x$ are edges, then $\kappa_x = ((\lambda_z)^{g_y})^f = \nu_x$. Since the indegree and outdegree of every vertex is equal to $1$, $\Gamma$ is a disjoint union of directed cycles.
\end{proof}

Let us illustrate the digraph construction with two small examples.

\begin{example}\label{Ex:LongCycle}
Let $X=\{1,\dots,5\}$ and let $G=\langle (1,2)(3,4,5)\rangle\cong C_6$ be a subgroup of $S_X$. Then $G$ has orbits $\{1,2\}$, $\{3,4,5\}$ and we can take $X/G=\{1,3\}$. We have $C_G(G_1) = C_G(G_3) = G$ and $F=N_{S_5}(G) = \langle G,(4,5)\rangle$. Consider $f=(1,2)(4,5)\in F$. The permutation $\bar{f}$ on $X/G$ induced by $f$ is trivial and the directed graph $\Gamma_r(G,f)$ is as follows:
\begin{displaymath}
\begin{tikzpicture}
\node (x1) at (0,1) {$C_G(G_1)$};
\node (x3) at (4,1) {$C_G(G_3)$};
\fill [lightgray] (-1.3,-4.25) rectangle (1.6,0.5);
\fill [lightgray] (2.7,-4.25) rectangle (5.6,0.5);
\node (a1) at (0,0) {()};
\node (b1) at (0,-0.75) {(1,2)};
\node (c1) at (0,-1.5) {(3,4,5)};
\node (d1) at (0,-2.25) {(3,5,4)};
\node (e1) at (0,-3) {(1,2)(3,4,5)};
\node (f1) at (0,-3.75) {(1,2)(3,5,4)};

\node (a3) at (4,0) {()};
\node (b3) at (4,-0.75) {(1,2)};
\node (c3) at (4,-1.5) {(3,4,5)};
\node (d3) at (4,-2.25) {(3,5,4)};
\node (e3) at (4,-3) {(1,2)(3,4,5)};
\node (f3) at (4,-3.75) {(1,2)(3,5,4)};

\draw[->] (a1) edge[loop right] (a1);
\draw[->] (b1) edge[loop right] (b1);
\draw[->] (a1) edge[loop right] (a1);
\draw[<->] (c1) edge[out=0, in=0] (d1);
\draw[<->] (e1) edge[out=0, in=0] (f1);

\draw[->] (a3) edge[loop right] (a3);
\draw[->] (b3) edge[loop right] (b3);
\draw[->] (a3) edge[loop right] (a3);
\draw[<->] (c3) edge[out=0, in=0] (d3);
\draw[<->] (e3) edge[out=0, in=0] (f3);
\end{tikzpicture}
\end{displaymath}
For instance, there is a directed edge $(3,4,5)\to (3,5,4)$ in the vertex set $C_G(G_1)$ because with $x=1$, $y=xf^{-1}=2$ and $g_y=(1,2)$ we have $((3,4,5)^{g_y})^f = (3,4,5)^f = (3,5,4)$. (Since $G$ is abelian here, the conjugations by the $g_y$s can be omitted.)
\end{example}

\begin{example}
Let $X=\{1,\dots,7\}$ and let $G=\langle (1,2),(1,2,3),(4,5),(4,5,6)\rangle\cong S_3\times S_3$ be a subgroup of $S_X$. Then $G$ has orbits $\{1,2,3\}$, $\{4,5,6\}$, $\{7\}$ and we can take $X/G=\{1,4,7\}$. We have $C_G(G_1) = \langle (2,3)\rangle$, $C_G(G_4) = \langle (5,6)\rangle$, $C_G(G_7) = 1$ and $F=N_{S_7}(G) = \langle G,(1,4)(2,5)(3,6)\rangle$. Consider $f=(1,5)(2,4)(3,6)\in F$. The permutation of $X/G$ induced by $f$ is given by $(1,4)(7)$ and the directed graph $\Gamma_r(G,f)$ is as follows:
\begin{displaymath}
\begin{tikzpicture}
\node (x1) at (0,1) {$C_G(G_1)$};
\node (x4) at (4,1) {$C_G(G_4)$};
\node (x7) at (8,1) {$C_G(G_7)$};
\fill [lightgray] (-1.3,-1.25) rectangle (1.6,0.5);
\fill [lightgray] (2.7,-1.25) rectangle (5.6,0.5);
\fill [lightgray] (6.7,-1) rectangle (9.6,0);
\node (a1) at (0,0) {()};
\node (b1) at (0,-0.75) {(2,3)};
\node (a4) at (4,0) {()};
\node (b4) at (4,-0.75) {(5,6)};
\node (a7) at (8,-0.375) {()};
\draw[<->] (a1) -- (a4);
\draw[<->] (b1) -- (b4);
\draw[->] (a7) edge[loop right] (a7);
\end{tikzpicture}
\end{displaymath}
This must be the case because the cycle structure of permutations is preserved by conjugation. To give a detailed calculation in one case, there is an edge from $(5,6)$ in $C_G(G_4)$ to $(2,3)$ in $C_G(G_1)$ since with $x=1$, $y=xf^{-1}=5$ and $g_y=(4,5)$ we have $((5,6)^{g_y})^f = (2,3)$.
\end{example}

Note that a directed cycle of $\Gamma_r(G,f)$ can visit a given vertex set $C_G(G_x)$ more than once before closing upon itself. For instance, in Example \ref{Ex:LongCycle}, there is a cycle that starts in $C_G(G_1)$ at $(3,4,5)$, returns to $C_G(G_1)$ at $(3,5,4)$, and only then closes itself. Call a cycle of $\Gamma_r(G,f)$ \emph{short} if it intersects every vertex set $C_G(G_x)$ at most once.

\begin{theorem}
Let $X$ be a finite set, $G\le S_X$, $Y=\mathrm{Fol}_r(G)$, $F=N_{S_X}(G)$ and consider the action \eqref{Eq:OnTuples} of $F$ on $Y$.

For $f\in F$, let $\bar{f}$ be the permutation of $X/G$ induced by $f$, $\mathrm{Cyc}(\bar f)$ a complete set of cycle representatives of $\bar{f}$, and $c(\bar f,x)$ the cycle of $\bar f$ containing $x\in\mathrm{Cyc}(\bar f)$. Then there is a one-to-one correspondence between the fixed points $\mathrm{Fix}(Y,f)$ of the action of $\langle f\rangle$ on $Y$ and the tuples $(c_x:x\in\mathrm{Cyc}(\bar f))$, where $c_x$ is a short directed cycle of $\Gamma_r(G,f)$ with vertices in $\bigcup_{y\in c(\bar{f},x)}C_G(G_y)$.

Therefore, the number of orbits of $F$ on $Y$ is given by
\begin{displaymath}
    |Y/F| = \frac{1}{|F|}\sum_{f\in F} |\mathrm{Fix}(Y,f)| = \frac{1}{|F|}\sum_{f\in F} \prod_{x\in\mathrm{Cyc}(\bar f)} \gamma_r(G,f,x),
\end{displaymath}
where $\gamma_r(G,f,x)$ is the number of short directed cycles of $\Gamma_r(G,f)$ with vertices in $\bigcup_{y\in c(\bar{f},x)}C_G(G_y)$.
\end{theorem}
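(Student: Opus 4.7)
The plan is to first translate the condition ``$\Lambda$ is fixed by $f$'' into a graph-theoretic condition on $\Gamma_r(G,f)$, then decompose any fixed folder into short directed cycles indexed by $\mathrm{Cyc}(\bar f)$, and finally apply Burnside's Lemma.

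First I would unwind the definitions of the action \eqref{Eq:OnTuples} and of $\Gamma_r(G,f)$ to observe that a folder $\Lambda=(\lambda_x:x\in X/G)$ lies in $\mathrm{Fix}(Y,f)$ if and only if, for every $x\in X/G$, with $y=xf^{-1}$ and $z=yg_y^{-1}\in X/G$, we have $\lambda_x=((\lambda_z)^{g_y})^f$; equivalently, $\Gamma_r(G,f)$ contains the directed edge $\lambda_z\to\lambda_x$ for every such $x$. Writing $S=\{\lambda_x:x\in X/G\}$ for the vertices selected by the folder (one in each part $C_G(G_x)$), the fixed-point condition says precisely that both the incoming and outgoing edges at each vertex of $S$ land inside $S$. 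Since by Proposition \ref{Pr:Digraph}(iv) every vertex of $\Gamma_r(G,f)$ has indegree and outdegree $1$, the subgraph induced on $S$ is a disjoint union of directed cycles.

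Next, by Proposition \ref{Pr:Digraph}(ii), collapsing each part $C_G(G_x)$ to a single vertex sends every directed cycle inside the subgraph induced on $S$ to a cycle of $\bar f$ on $X/G$. Because $S$ meets each part $C_G(G_x)$ exactly once, this collapse is injective on the vertices of each $S$-cycle, forcing each $S$-cycle to be short; moreover, each cycle of $\bar f$ is the image of exactly one $S$-cycle, since the parts indexed by a given $\bar f$-cycle must each contribute one element of $S$. Conversely, given any tuple $(c_x:x\in\mathrm{Cyc}(\bar f))$ of short cycles with $c_x$ using vertices from $\bigcup_{y\in c(\bar f,x)}C_G(G_y)$, shortness together with the projection property of Proposition \ref{Pr:Digraph}(ii) forces $c_x$ to select exactly one vertex from $C_G(G_y)$ for each $y\in c(\bar f,x)$. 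Since the cycles $c(\bar f,x)$ partition $X/G$, the union of the $c_x$'s yields a unique folder $\Lambda$; by construction each selected vertex's outgoing edge lands at another selected vertex, so $\Lambda\in\mathrm{Fix}(Y,f)$. This establishes the stated bijection and hence the identity $|\mathrm{Fix}(Y,f)|=\prod_{x\in\mathrm{Cyc}(\bar f)}\gamma_r(G,f,x)$.

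Finally, Burnside's Lemma applied to the action of $F$ on $Y$ gives
\begin{displaymath}
    |Y/F|=\frac{1}{|F|}\sum_{f\in F}|\mathrm{Fix}(Y,f)|=\frac{1}{|F|}\sum_{f\in F}\prod_{x\in\mathrm{Cyc}(\bar f)}\gamma_r(G,f,x),
\end{displaymath}
as claimed. The main obstacle is the bookkeeping in the second paragraph: one must verify that the decomposition of $S$ into cycles of $\Gamma_r(G,f)$ is compatible with the collapse map to $X/G$ in the precise sense that the $S$-cycles are automatically short and project bijectively onto the cycles of $\bar f$. Both statements follow from the fact that $|S\cap C_G(G_x)|=1$ for every $x\in X/G$, together with Proposition \ref{Pr:Digraph}(ii).
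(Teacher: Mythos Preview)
Your proof is correct and follows essentially the same approach as the paper: identify a folder with a selection of one vertex per part of $\Gamma_r(G,f)$, observe that the fixed-point condition is equivalent to this selection being closed under the edge map (hence a union of short cycles, one per cycle of $\bar f$), and then apply Burnside's Lemma. Your argument is simply more detailed than the paper's, which compresses the entire bijection into two sentences; in particular you spell out both directions of the correspondence and explain explicitly why the resulting cycles must be short, which the paper leaves implicit.
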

\begin{proof}
Let $f\in N_{S_X}(G)$ and let $\Lambda = (\lambda_x:x\in X/G)$ be a rack folder realized as a selection of vertices of the digraph $\Gamma=\Gamma_r(G,f)$, one vertex in each part $C_G(G_x)$. Then $\Lambda f = \Lambda$ if and only if all edges of $\Gamma$ starting in $\Lambda$ also terminate in $\Lambda$, or, in other words, if and only if $\Lambda$ is a disjoint union of short directed cycles. We are done by Burnside's Lemma.
\end{proof}

Note that while $\mathrm{Fol}_r(G)$ is of size $\prod_{x\in X/G}|C_G(G_x)|$ and therefore possibly quite large, the vertex set of every $\Gamma_r(G,f)$ is only of size $\sum_{x\in X/G}|C_G(G_x)|$ so it is relatively easy to construct $\Gamma_r(G,f)$ explicitly. Due to the structure of $\Gamma_r(G,f)$, it is not difficult to count its short directed cycles. Indeed, since the outdegree and indegree of every vertex is equal to $1$, it suffices to trace the cycles starting at vertices of $\bigcup_{x\in\mathrm{Cyc}(\bar f)}C_G(G_x)$ and keep only the short cycles. Of course, we can also determine the number of short cycles from suitable powers of the adjacency matrix of $\Gamma_r(G,f)$.

\section{Open problems}\label{Sc:Open}

We were not able to determine the numbers $r(14)$ and $q(14)$. Note that it suffices to find $r_{\textrm{non-$2$-red}}(14)$ and $q_{\textrm{non-$2$-red}}(14)$ since $r_{\textrm{$2$-red}}(14)$ and $q_{\textrm{$2$-red}}(14)$ are known, cf. Tables \ref{Tb:Racks} and \ref{Tb:Quandles}. The main obstacle in the enumeration is the size of the spaces of rack folders and quandle folders for certain nonabelian permutation groups. For larger orders, it will not be feasible to consider all subgroups of $S_n$ up to conjugacy.

\begin{problem}
Determine the number of isomorphism types of racks and quandles of order $14$.
\end{problem}

It is to be expected that $r(14)$ and $q(14)$ will only slightly exceed $r_{\textrm{$2$-red}}(14)$ and $q_{\textrm{$2$-red}}(14)$, respectively. However, the asymptotic proportion of $2$-reductive racks and $2$-reductive quandles is less predictable:

\begin{problem}\label{Pb:Lims}
What are the limits $\lim_{n\to\infty} \frac{r_{\textrm{$2$-red}}(n)}{r(n)}$ and $\lim_{n\to\infty} \frac{q_{\textrm{$2$-red}}(n)}{q(n)}$, if they exist?
\end{problem}

Let us now look at the structure of $\lmlt{X,*}$ for racks and quandles. Blackburn \cite{Blackburn} proved that for every group $G$ there is a quandle $(X,*)$ on some set $X$ such that $\lmlt{X,*}$ is isomorphic to $G$. But not every permutation group is rack/quandle admissible. In view of the results in Section \ref{Sc:Representation}, a subgroup $G\le S_X$ is rack admissible (resp. quandle admissible) if and only if there are $(\lambda_x\in C_G(G_x):x\in X/G)$ (resp. $(\lambda_x\in Z(G_x):x\in X/G)$) such that $\langle \bigcup_{x\in X/G} \lambda_x^G\rangle = G$.

\begin{problem}\label{Pb:LMlt}
Describe a large or algebraically significant class of subgroups $G$ of $S_X$ that are not rack/quandle admissible, that is, for which there is no rack/quandle $(X,*)$ such that $\lmlt{X,*}=G$.
\end{problem}

To shed some light on Problems \ref{Pb:Lims} and \ref{Pb:LMlt}, we offer the following observations about left multiplication groups of quandles of order $10$. These and similar facts can be verified using the library of racks and quandles available on the web page of the first author.

There are $102771$ quandles of order $10$ and their left multiplication groups form a set of $471$ non-equivalent permutation groups. The number of quandles associated with a given permutation group varies greatly. There are $63$ permutation groups with a unique quandle, $84$ with $2$ quandles and $22$ with $3$ quandles. On the other side of the spectrum, there are five permutation groups that account for $20084$, $17336$, $12033$, $6359$ and $6284$ quandles, respectively. Up to isomorphism, these groups are $C_3\times C_2^3$, $C_2^4$, $C_3\times C_2^2$, $C_3^2\times C_2$ and $C_4\times C_2^2$, respectively. The most prolific group has generators $(2,5,3)(7,9)(8,10)$, $(8,10)$ and $(4,6)(7,9)$.

There is a unique non-solvable group among the $471$ permutation groups, namely a copy of $S_5$ generated by $(1,4,6,3,2)(5,8,7,10,9)$ and $(3,8)(5,9)(6,10)$. There are $2$ quandles associated with this group. Furthermore, there are $247$ non-nilpotent groups with $3383$ associated quandles, $320$ non-abelian groups with $4239$ associated quandles, and $59$ elementary abelian $2$-groups with $35091$ associated quandles.

\begin{problem}
Let $G\le S_X$. Show how Burnside's Lemma can be effectively used to count the orbits of the action of $N_{S_X}(G)$ on $\mathrm{Env}_r(G)$ or $\mathrm{Env}_q(G)$.
\end{problem}

Finally, our computational data support the following conjecture:

\begin{conjecture}
Let $p$ be a prime. Then $r_{\emph{med}}(p) - r_{\emph{$2$-red}}(p) = p-2$ and $q_{\emph{med}}(p) - q_{\emph{$2$-red}}(p) =  p-2$. In particular, every medial rack of order $p$ that is not $2$-reductive is a quandle.
\end{conjecture}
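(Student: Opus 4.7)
The plan has two parts: show that the $p-2$ Alexander quandles on $\mathbb{Z}_p$ provide distinct medial non-$2$-reductive quandles, and show that every medial non-$2$-reductive rack of prime order $p$ is isomorphic to one of them. For the first part, for each $t \in \{2,\dots,p-1\}$ the Alexander quandle $A_t = (\mathbb{Z}_p, *_t)$ defined by $x *_t y = (1-t)x + ty$ is medial (its displacement group consists of translations of $\mathbb{Z}_p$, hence is abelian), non-$2$-reductive (as $t \neq 1$ makes two $L_x$'s fail to commute), and isomorphisms between different $A_t$'s reduce to affine bijections of $\mathbb{Z}_p$, which force $t = t'$ by a short direct computation.

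For the second part, let $(X,*)$ be a medial non-$2$-reductive rack of order $p$, and write $G = \lmlt{X,*}$ and $D = \dis{X,*}$. Then $D$ is abelian, $G$ is non-abelian, and $G/D$ is cyclic because all $L_x$ lie in a single coset of $D$. Fix $y_0 \in X$, set $g = L_{y_0}$, and write each $L_x$ uniquely as $g d_x$ with $d_x \in D$. Let $\phi(d) = gdg^{-1}$; because $D$ is abelian, $\phi$ depends only on $gD$, and $\phi \neq \mathrm{id}$ since $G$ is non-abelian. A direct consequence of self-distributivity is that $\beta \colon X \to D$, $\beta(x) = d_x$, is a rack morphism from $(X,*)$ into the Alexander quandle $(D,\circ)$ with $a \circ b = (1-\phi)a + \phi b$; moreover, the identity $L_{hx} = hL_xh^{-1}$ yields $\beta(hx) = \beta(x) + \psi(h)$ for $h \in D$, where $\psi = \phi^{-1} - \mathrm{id}$.

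The crucial step is proving that $G$ is transitive on $X$. Let $H = \ker\psi = D^{\phi}$, a proper subgroup of $D$ since $\phi \neq \mathrm{id}$, and set $n = [D:H] \ge 2$. If $h \in D$ fixes some $x \in X$, then $\beta(x) = \beta(hx) = \beta(x) + \psi(h)$ forces $\psi(h) = 0$, so $D_x \le H$ for every $x \in X$; consequently every $D$-orbit on $X$ has size $|D|/|D_x|$ divisible by $n$. Let $X_1,\dots,X_k$ be the $G$-orbits; by transitivity of $G$ on each $X_i$ and normality of $D$, each $X_i$ is partitioned into $m_i$ many $D$-orbits of common size $nf_i$, where $f_i = |H|/|D_x|$ for any $x \in X_i$ is a positive integer. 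Summing over orbits gives $p = \sum_i m_i \cdot n f_i = n \sum_i m_i f_i$. Since $p$ is prime and $n \ge 2$, the only possibility is $n = p$ and $\sum_i m_i f_i = 1$, which forces $k = 1$, i.e., $G$ is transitive.

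Transitivity makes $D$ a regular abelian subgroup of $S_p$, hence cyclic of order $p$, and $G \le N_{S_X}(D) = \mathrm{AGL}(1,p)$. Identifying $X$ with $\mathbb{Z}_p$, every $L_x$ takes the form $y \mapsto ty + b_x$ for a common $t \in \mathbb{Z}_p^{\times}$ with $t \neq 1$. The squaring map $\sigma(x) = x*x$ commutes with every $L_y$ (standard rack calculation), so $\sigma \in C_{S_X}(G) \subseteq C_{S_X}(D) = D$; hence $\sigma$ is a translation, forcing $b_x = (1-t)x + c$ for some constant $c$. Substituting into the self-distributivity identity $b_{x*y} = tb_y + (1-t)b_x$ gives $(1-t)c = 0$, so $c = 0$. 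Therefore $(X,*) \cong A_t$ for some $t \in \{2,\dots,p-1\}$, and in particular $(X,*)$ is a quandle, yielding the claimed equalities and the ``in particular'' clause. The main obstacle is the transitivity step, and specifically the orbit-counting identity $p = n \sum_i m_i f_i$, which uses primality of $p$ in an essential way to collapse the problem to the connected case.
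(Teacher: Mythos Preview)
The paper does not prove this statement: it is presented as a \emph{conjecture} supported only by the computational data in Tables~\ref{Tb:Racks} and~\ref{Tb:Quandles}, with no argument given. So there is no ``paper's own proof'' to compare against, and your proposal is an attempt to settle an open problem rather than to reproduce an existing argument.

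That said, your argument is correct and complete. The key steps all check out: the formula $\beta(hx)=\beta(x)+\psi(h)$ follows from $L_{h(x)}=hL_xh^{-1}$ and the abelianness of $D$; the inclusion $D_x\le H=\ker\psi$ then forces every $D$-orbit to have size divisible by $n=[D:H]\ge 2$, and the identity $p=n\sum_i m_if_i$ indeed collapses to $n=p$, $k=m_1=f_1=1$ by primality. This gives not only transitivity of $G$ but transitivity of $D$ itself, and since an abelian transitive subgroup of $S_X$ is self-centralizing and hence regular, $D\cong C_p$ and $G\le\mathrm{AGL}(1,p)$. The squaring-map trick is the clean way to finish: $\sigma$ commutes with every $L_y$ by left self-distributivity, and transitivity of $G$ forces $\sigma$ to be surjective (hence bijective), so $\sigma\in C_{S_X}(G)\le C_{S_X}(D)=D$ is a translation; this pins down $b_x=(1-t)x+c$, and feeding this back into $b_{x*y}=tb_y+(1-t)b_x$ gives $(1-t)c=0$, whence $c=0$. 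The non-isomorphism of distinct $A_t$ follows because any isomorphism normalizes the common displacement group (the translations of $\mathbb{Z}_p$) and is therefore affine. Altogether you have proved the conjecture.
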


\section*{Acknowledgement}

We thank Alexander Hulpke for the reference \cite{Holt}, P\v{r}emysl Jedli\v{c}ka for the numbers of $2$-reductive racks of small orders, Victoria Lebed for the references \cite{AshfordRiordan} and \cite{Blackburn}, David Stanovsk\'y for useful discussions about Burnside's Lemma in the context of affine meshes, Glen Whitney for a clarification about $2$-reductivity in racks, and anonymous referees for useful comments. The calculations took place at the High Performance Computing Cluster of the University of Denver.

\end{document}